\newtheorem{theorem}{Theorem}[section]
\newtheorem{lemma}[theorem]{Lemma}
\newcommand{\be}{\begin{equation}}
\newcommand{\ee}{\end{equation}}
\newcommand{\lt}{\left}
\newcommand{\rt}{\right}
\newcommand{\R}{\mathbb{R}}
\newcommand{\mH}{\mathbb{H}}
\newcommand{\hF}{\hat{F}}
\newcommand{\hG}{\hat{G}}
\newcommand{\tQ}{\tilde{Q}}
\newcommand{\e}{\epsilon}
\newcommand{\lhs}{\text{l.h.s.}}
\newcommand{\rhs}{\text{r.h.s.}}
\newcommand{\bQ}{\bar{Q}}
\newcommand{\tL}{\tilde{\Lambda}}
\newcommand{\La}{\Lambda}
\newcommand{\ga}{\gamma}
\theoremstyle{definition}
\numberwithin{equation}{section}
\begin{document}
\setlength{\baselineskip}{1.2\baselineskip}

\title[Constant curvature graphs in Hyperbolic space]
{Optimal regularity of constant curvature graphs in Hyperbolic space}

\author{Xumin Jiang}
\address{Department of Mathematics, Rutgers University,
Piscataway, NJ 08854}
\email{xj60@math.rutgers.edu}
\author{Ling Xiao}
\address{Department of Mathematics, University of Connecticut,
Storrs, CT 06268}
\email{ling.2.xiao@uconn.edu}

\begin{abstract}
Inspired by \cite{HanJiang, HanJiang2}, we study the boundary regularity of constant curvature hypersurfaces in the hyperbolic space $\mathbb{H}^{n+1},$ which have prescribed asympototic boundary at infinity. Through constructing the boundary expansions of the solutions, we derive the optimal regularity of the solutions. Moreover, we obtain an equivalent condition that guarantees the smoothness of the solutions.
\end{abstract}

\maketitle

\section{Introduction}
\label{sec0}
\bigskip
In \cite{Guan&Spruck&Szapiel, Guan&Spruck, GSX}, Bo Guan, Joel Spruck, and their collaborators studied the existence of a complete strictly convex hypersurface of constant curvature in hyperbolic space with a prescribed asymptotic
boundary at infinity. More specifically, given $\Gamma=\partial\Omega\subset\partial_\infty\mathbb{H}^{n+1}$
and a large class of smooth symmetric functions
$\phi$ of $n$ variables, they proved the existence of a complete strictly convex hypersurface $\Sigma$ in $\mathbb{H}^{n+1}$ satisfying
\begin{equation}
\label{eq0}
\begin{aligned}
\phi(\kappa[\Sigma]) &=\sigma\\
\partial\Sigma = \Gamma
\end{aligned}
\end{equation}
where $\kappa[\Sigma] = (\kappa_1, . . . , \kappa_n)$ denotes the positive hyperbolic principal curvatures of $\Sigma$
and $\sigma\in(0, 1)$ is a constant. They assumed that the hypersurface $\Sigma$ is a vertical graph over the domain
$\Omega,$ i.e.,
\[\Sigma = \{(x, x_{n+1}) : x \in\Omega, x_{n+1} = f(x)\}.\]
Moreover, they proved that $f^2\in C^{\infty}(\Omega)\cap C^{1,1}(\bar{\Omega}).$

In this paper, we will study the optimal regularity of $\Sigma$ on the boundary. We will use the half-space model, i.e.,
\[\mathbb{H}^{n+1}=\{(x, x_{n+1})\in\mathbb{R}^{n+1}| x_{n+1}>0\}\]
equipped with the hyperbolic metric
\[ds_H^2=\frac{1}{x_{n+1}^2}ds_E^2,\]
where $ds_E^2$ denotes the Euclidean metric on $\mathbb{R}^{n+1}.$ We denote by $\partial_\infty\mathbb{H}^{n+1}$
the boundary at infinity of $\mathbb{H}^{n+1},$ which is identified with $\R^{n}\times\{0\}.$

Han and Jiang \cite{HanJiang} studied the boundary expansions for minimal graphs in the hyperbolic space. Subsequently,
Han and Wang \cite{HanWang} studied the boundary expansions for constant mean curvature graph in the hyperbolic space.
In this paper, we will follow their ideas to study the boundary expansions of solutions to the Dirichlet problem for
constant general curvature graphs in hyperbolic space. The equations we study here are fully nonlinear, in contrast to the constant mean curvature case which is quasilinear. The biggest challenge here is to improve the lower order estimates and to work through a more complicated formal computation to find dominating terms.

In order to illustrate our ideas more clearly, in this paper, we will focus on the case when $\phi$ is curvature quotient, i.e., we let
\[\phi(\kappa[\Sigma])=\frac{H_n(\kappa)}{H_l(\kappa)}=\sigma^{n-l},\]
where
\[H_k=\frac{1}{\binom{n}{k}}\sum\limits_{1\leq j_1<\cdots<j_k\leq n}\kappa_{j_1}\cdots\kappa_{j_k}\]
is the normalized $k$-th elementary symmetric function and we set $H_0\equiv 1$. Our ideas also work for a much more general curvature setting (e.g., $H_i$, $1\leq i\leq n$).

Under our assumptions, we can rewrite equation \eqref{eq0} as follows,
\begin{align}\begin{split}\label{eq-Main}
\frac{H_n(\kappa[a^v_{ij}])}{H_l(\kappa[ a^v_{ij}])} &= \sigma^{n-l} \quad \text{ in  } \Omega,\\
f&>0 \quad \text{ in  } \Omega,\\
f&=0  \quad \text{ on  } \partial\Omega,
\end{split}
\end{align}
where  the second fundamental form of $\Sigma$ is
\begin{align*}
 a^v_{ij} &= \frac{1}{w_f}(\delta_{ij}+ f \gamma_f^{ik}f_{kl}\gamma_f^{lj}),
\end{align*}
and
\begin{align*}
w_f&= \sqrt{1+|Df|^2},\\
\gamma_f^{ij} &= \delta_{ij}-\frac{f_i f_j}{w_f(1+w_f)}.\\
\end{align*}
Note that the matrix $\{\gamma_f^{ij}\}$ is invertible with inverse
\[\gamma_{ij}=\delta_{ij}+\frac{f_if_j}{1+w_f},\]
which is the square root of $\{g_{ij}^E\}.$

 Following \cite{Lin1989Invent}, we can view the graph of $f$ locally as the graph of $u,$ whose domain is the vertical plane that intersects $\Gamma$ at a boundary point. Specifically, we fix a boundary point of $\Gamma$, say the origin,
and assume that the vector $e_n = (0,\cdots, 0, 1)$ is the exterior normal vector to $\Gamma$
at the origin. Then, with $y = (y', y_n)$, the $y'$ -hyperplane is the tangent plane of $\Gamma$
at the origin, and the boundary $\Gamma$ can be expressed in a neighborhood of the origin as a graph
of a smooth function over $\mathbb{R}^{n-1}\times\{0\}$, say
$y_n=\varphi(y').$ We now denote points in $\mathbb{R}^{n+1} = \mathbb{R}^n\times\mathbb{R}$ by $(y', y_n, t)$. We can
represent the graph of $f$ as a graph of a new function $u$ defined in terms of $(y', 0, t)$
for small $y'$ and $t$, with $t > 0$. In other words, we treat $\mathbb{R}^n = \mathbb{R}^{n-1}\times\{0\}\times\mathbb{R}$ as our
new base space and write $u(y) =u(y', t)$. Then, for some $R > 0$, u
satisfies
\be\label{fe.2}
\frac{\det\lt(\frac{t}{w}\gamma^{ik}u_{kl}\gamma^{lj}-\frac{u_t}{w}\delta_{ij}\rt)}{H_l\lt(\kappa[\frac{t}{w}\gamma^{ik}u_{kl}\gamma^{lj}-\frac{u_t}{w}\delta_{ij}]\rt)}
=\sigma^{n-l}\,\, \mbox{in $G_{r, \delta}$}
\ee
and
\be\label{fe.2'}
u=\varphi\,\,\mbox{on $B_r'$,}
\ee
where $G_{r, \delta}=\{y=(y', 0, t)| |y'|<r, 0<t<\delta\},$ $B_r'=\{y=(y', 0, 0)||y'|<r\},$
$w = \sqrt{1+|Du|^2}$, and $\gamma^{ij} = \delta_{ij}-\frac{u_i u_j}{w(1+w)}$.

We can establish formal expansions for solutions of
\eqref{fe.2} and \eqref{fe.2'} in the following form: for $k\geq n+1$,
\begin{align*}
u_k= \varphi(y^\prime) +c_1(y^\prime)t+ c_2(y^\prime)t^2+\cdots+ c_{n}(y^\prime)  t^{n} + \sum_{i=n+1}^k \sum_{j=0}^{N_i} c_{i,j}(y^\prime) t^i(\log t)^j,
\end{align*}
where $c_i, c_{i,j}$'s are coefficients only depending on coordinates $y_1,\cdots, y_{n-1}$. Moreover, $c_1$, $c_2$, $\cdots$, $c_n$, $c_{n+1,1}$ have explicit expressions in terms of $\varphi$, which will be derived in Section \ref{fe}. For $0\leq k\leq n$, we simply denote,
\begin{align*}
u_k= \varphi(y^\prime) +c_1(y^\prime)t+ c_2(y^\prime)t^2+\cdots+ c_{k}(y^\prime) t^{k},
\end{align*}
where $c_1$, $c_2$, $\cdots$, $c_k$, have explicit expressions in terms of $\varphi$.

We have following main theorems.
\begin{theorem}\label{thm-Main}
Assume in $G_{r, \delta}=\{(y^\prime, t): |y^\prime|<r, 0<t<\delta\}$, $\varphi\in C^{p, \alpha}(B_r'),$ for some $\alpha\in (0, 1).$  Let $u\in C^1(\bar{G}_{r, \delta})\cap C^{\infty}(G_{r, \delta})$ be a solution of equations \eqref{fe.2} and \eqref{fe.2'} satisfies
$
|D u|+ t|D^2 u|< C.
$
Then for any $0\leq  k \leq p$, $0<r'<r, 0<\delta'<\delta$, and $\epsilon\in (0, \alpha),$  there exists $c_i, c_{i,j}\in C^{p-i, \epsilon}(G_{r^\prime, \delta^\prime}),$ such that,
for any $\alpha\in (0,1)$, $\tau = 0, 1,\cdots, p-k,$ and any
$m = 0, 1,\cdots, k,$  we have
\be\label{eq0.1}
D^{\tau}_{y'}\partial_t^m(u-u_k)\in C^\epsilon(\bar{G}_{r', \delta'}),
\ee
and for any $(y', t)\in G_{r', \delta'},$
\be\label{eq0.2}
|D^{\tau}_{y'}\partial_t^m(u-u_k)(y', t)|\leq Ct^{k-m+\alpha}.
\ee
Here, $C$ is a constant depending only on $n, p, \alpha, r, \delta,$ the $C^0$ norm of $u$
in $G_{r, \delta},$ and the $C^{p, \alpha}$ norm of $\varphi$ in $B'_r.$
\end{theorem}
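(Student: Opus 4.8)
The plan is to prove \eqref{eq0.1}–\eqref{eq0.2} by induction on $k$, from $k=0$ up to $k=p$, at each level extracting the next coefficient of the formal expansion from an improved boundary decay estimate for the remainder $v_k:=u-u_k$ and then bootstrapping that decay to all derivatives. Throughout one writes \eqref{fe.2} as $F[u]=\sigma^{n-l}$ and treats $F$ near $\{t=0\}$ as a perturbation of the model degenerate operator $a_0\,t\,\partial_t^2-b_0\,\partial_t$. The key structural input, obtained by a direct computation at $t=0$ (using $H_n'(\sigma I)=\sigma^{n-1}I$, $H_l'(\sigma I)=\tfrac ln\sigma^{l-1}I$, the explicit value $c_1=-\sigma\sqrt{1+|D\varphi|^2}/\sqrt{1-\sigma^2}$, and the identity $(\gamma^2)_{nn}|_{t=0}=1-\sigma^2$), is that $b_0/a_0=n$, so the indicial polynomial $\lambda\bigl(a_0(\lambda-1)-b_0\bigr)$ has roots $0$ and $n+1$. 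This is exactly why the expansion has a pure polynomial tail $c_1t+\cdots+c_nt^n$ with every $c_i$ determined recursively from $\varphi$, while a resonance — hence a $t^{n+1}\log t$ term with explicit coefficient $c_{n+1,1}$ — first appears at order $n+1$; this entire formal construction is carried out in Section \ref{fe}. As a preliminary reduction (Step 1, tangential regularity), differentiating \eqref{fe.2} up to $p$ times in the $y'$-directions produces, for each $|\tau|\le p$, an equation for $D^\tau_{y'}u$ of the same structural type forced by lower-order tangential derivatives and by $D^\tau_{y'}\varphi\in C^{0,\alpha}(B_r')$; combined with $u\in C^\infty(G_{r,\delta})$ and $|Du|+t|D^2u|\le C$, interior Schauder estimates on the intrinsic balls $B_{t/4}(y',t)$ yield $|D^\tau_{y'}u|+t\,|D^\tau_{y'}D^2u|\le C$ on $G_{r',\delta'}$, which gives \eqref{eq0.1}–\eqref{eq0.2} at level $k=0$ and leaves only $t$-derivatives to control.

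\emph{Step 2: linearized equation and decay of the remainder.} With $v_k=u-u_k$ and $E_k:=F[u_k]-\sigma^{n-l}$, the identity $-E_k=F[u]-F[u_k]=\int_0^1\tfrac d{ds}F[u_k+sv_k]\,ds$ shows that $v_k$ solves a linear degenerate equation
\[
\mathcal{L}_k v_k:=t\,a^{ij}(y)\,\partial_{ij}v_k+b^i(y)\,\partial_i v_k=-E_k ,
\]
where $(a^{ij})$ is uniformly elliptic — by the strict ellipticity of $H_n/H_l$ along the approximate solution, whose curvatures are close to $(\sigma,\dots,\sigma)$ near $\{t=0\}$ — and $a^{ij},b^i\in C^{0,\epsilon}(\bar G_{r',\delta'})$ by Step 1, with $\mathcal{L}_k$ agreeing near $\{t=0\}$ with $a_0t\partial_t^2-b_0\partial_t$. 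By the construction of $u_k$, $E_k$ vanishes to the order dictated by the recursion in Section \ref{fe} (one power of $t$ per subtracted coefficient, with $\log$ corrections near order $n+1$ and a loss of a derivative, hence of $t^\alpha$, at the top level $k=p$). Since no indicial root lies in $(0,n+1)$ and $k+\epsilon$ is never an integer, the function $A\,t^{k+\epsilon}$ is, for $A$ large, a supersolution of $\mathcal{L}_k$ on a boundary slab $\{0<t<\delta_0\}$; a maximum-principle comparison — using the interior regularity of $v_k$ on $\{|y'|=r'\}\cup\{t=\delta_0\}$ and the level-$(k-1)$ estimate near $\{t=0\}$, after subtracting the leading term, which by Section \ref{fe} equals $c_k(y')t^k$ (resp.\ a $t^k(\log t)^j$-term) — then gives $|v_k|\le C\,t^{k+\epsilon}$, which is \eqref{eq0.2} with $\tau=m=0$.

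\emph{Step 3: upgrading to derivatives and closing the induction.} On the dyadic slabs $Q_j(y_0')=\{|y'-y_0'|<2^{-j},\ 2^{-j}<t<2^{-j+1}\}$ put $\tilde v(z)=2^{j(k+\epsilon)}v_k(y_0'+2^{-j}z',\,2^{-j}z_{n+1})$; the rescaled equation is uniformly elliptic and nondegenerate on a fixed region, with $\|\tilde v\|_{L^\infty}\le C$ by Step 2 and coefficients bounded in $C^{0,\epsilon}$. Interior Schauder estimates bound every derivative of $\tilde v$, which unwinds to $|D^\tau_{y'}\partial_t^m v_k|\le C\,t^{k-m+\epsilon}$ for $|\tau|\le p-k$, $0\le m\le k$ — this is \eqref{eq0.2} — and a telescoping sum of these slab bounds over $j$ shows that each $D^\tau_{y'}\partial_t^m v_k$ extends to a $C^{0,\epsilon}$ function up to $\{t=0\}$, which is \eqref{eq0.1}. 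Feeding \eqref{eq0.2} back into the recursion of Section \ref{fe} identifies the next-order coefficients $c_{k+1,j}$, places them in $C^{p-k-1,\epsilon}(G_{r',\delta'})$, and completes the step $k\to k+1$.

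\emph{Main obstacle.} The crux is Step 2 for the \emph{fully nonlinear} operator: in contrast to the quasilinear constant-mean-curvature case, $E_k=F[u_k]-\sigma^{n-l}$ is an intricate rational expression in many derivative factors of $u_k$, and determining its precise vanishing order — the ``dominating terms'' of the introduction — requires the delicate formal computation of Section \ref{fe}; moreover the degenerate Schauder theory and the sharp barrier both hinge on the linearization $\mathcal{L}_k$ remaining uniformly elliptic up to $\{t=0\}$, which rests in turn on propagating the low-order bound $|Du|+t|D^2u|\le C$ and the strict ellipticity of $H_n/H_l$ along the approximate solution. Securing these lower-order estimates, which seed the whole iteration and already make the base cases $k=0,1$ nontrivial, is where the bulk of the work lies.
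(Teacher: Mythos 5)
There is a genuine gap in Step 2, and it is precisely at the point where this problem differs from a routine induction. Your comparison function $A\,t^{k+\epsilon}$ is a supersolution of the model operator $a_0 t\partial_t^2-b_0\partial_t$ (with $b_0/a_0=n$) only when $k+\epsilon<n+1$: applying the operator to $t^\lambda$ gives $a_0\lambda(\lambda-1-n)t^{\lambda-1}$, which changes sign at the upper indicial root $\lambda=n+1$. The relevant condition is therefore not ``no indicial root in $(0,n+1)$ and $k+\epsilon$ non-integer'' but $k+\epsilon$ lying \emph{below} the larger root; for $k\ge n+1$ the barrier has the wrong sign, enlarging $A$ only makes it worse, and the maximum-principle induction cannot pass order $n+1$ --- consistent with the fact that the true solution carries a $t^{n+1}\log t$ term. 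The paper uses the maximum principle exactly once, to prove $|u-u^*|\le Ct^{n+1}$ (Theorem \ref{th4.1}), and even there the supersolution is not a pure power but $\psi=A\bigl((|y'|^2+t)^{n+1}-(|y'|^2+t)^{n+3/2}\bigr)$; the $|y'|^2$-dependence is needed both to absorb the tangential error terms and to dominate $v$ on the lateral boundary $\{|y'|=\sqrt{\delta}\}$ down to the corner $t=0$, where your plain barrier $At^{k+\epsilon}$ cannot dominate a remainder that is only known to decay like $t^{k-1+\epsilon}$ from the previous induction step. Beyond order $n+1$ the paper switches methods entirely: after the low-order estimates (Lemma \ref{lem-T2} via the spherical barriers, Lemma \ref{lemtr.1} via rescaling, Evans--Krylov and Schauder) it casts the equation for $v=u-u_1$ in the degenerate form \eqref{eq5.12} with $\tilde Q_{tt}=1$, $\tilde Q_t=-n$, obtains tangential regularity from Theorem 4.3 of \cite{HanJiang} (Theorem \ref{th5.1}), and then gets all higher orders, including the logarithmic terms, by repeatedly differentiating in $t$ and \emph{integrating} the resulting ODEs $t^2v_k''-(n-2k)tv_k'-(n+1-k)k\,v_k=\bar F^{(k-1)}$ (Theorem \ref{th6.1}, quoting Theorem 5.3 of \cite{HanJiang}), not by comparison arguments.

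A secondary issue is Step 3: the dyadic-slab rescaling costs one factor of $t$ per derivative, tangential as well as normal, so from $|v_k|\le Ct^{k+\epsilon}$ alone interior Schauder yields only $|D^\tau_{y'}\partial_t^m v_k|\le Ct^{k+\epsilon-m-\tau}$, not the claimed $Ct^{k-m+\epsilon}$ for $\tau\ge 1$. Tangential estimates do not come for free from the scalar decay; they must be established by a separate argument (differentiating the equation tangentially and controlling the resulting system, which is exactly the content of the paper's Section \ref{tr} and of the quoted tangential-regularity theorem). Your Step 1 asserts bounds on $D^\tau_{y'}u$ but not the decay of $D^\tau_{y'}v_k$ in $t$, which is what the telescoping in Step 3 actually requires.
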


We also have the local convergence theorem,
\begin{theorem}\label{thm-MainConv}
Assume the same assumptions as in Theorem \ref{thm-Main}. In addition, if $\varphi$ is analytic in $y^\prime$, then the series $\{u_k\}$ with logarithmic terms derived from the boundary expansion converges to $u$ uniformly in $G_{r^\prime, \delta^\prime}$, for any $0<r^\prime<r, 0<\delta^\prime<\delta$. Furthermore, $u$ is analytic in \begin{align*}
y^\prime, t, t\log t
\end{align*}
 for $(y^\prime, t)\in \bar{G}_{r^\prime, \delta^\prime}\cup \{(y^\prime, 0): |y^\prime|< r^\prime\}$.
\end{theorem}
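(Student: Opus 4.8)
The plan is to prove that, when $\varphi$ is analytic, the formal boundary expansion converges with a positive radius, to identify its sum with $u$, and then to spread this identification over all of $G_{r',\delta'}$. The first move is to repackage the logarithmic terms: every term of the expansion has the form $c_i(y')t^i$ or $c_{i,j}(y')t^i(\log t)^j$ with $0\le j\le N_i\le i$ (log terms appear only for $i\ge n+1$), so writing $t^i(\log t)^j=t^{i-j}\zeta^j$ with $\zeta:=t\log t$ turns the expansion into a formal power series in the three variables $(y'-y'_0,t,\zeta)$ centered at a fixed boundary point $y'_0\in B'_{r'}$. Since $|\zeta|<1$ for $0<t<\delta''<1$, absolute and uniform convergence of this power series on a polydisc will yield exactly the asserted analyticity of $u$ in $y', t, t\log t$; and because the log terms start at order $i=n+1\ge 2$, the sum is automatically $C^1$ up to $\{t=0\}$, consistent with the hypothesis on $u$.

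Next I would establish convergence of the formal series by the majorant method. Substituting the series into \eqref{fe.2} and collecting powers of $t$ and $\log t$, the coefficients are determined recursively: for each $i$ one obtains an equation $P(i)\,c_i=\Phi_i$ (and similarly for the $c_{i,j}$), where $P$ is the indicial polynomial of the linearized operator and $\Phi_i$ is a universal polynomial expression in $\varphi$, its $y'$-derivatives, and the already-determined coefficients $c_{i'},c_{i',j'}$ with $i'<i$; logarithmic terms are forced precisely at the roots of $P$. Feeding in the analytic bound $|D^\beta\varphi(y'_0)|\le A\,B^{|\beta|}|\beta|!$ and using the explicit shape of $\Phi_i$, I would prove by induction a majorant estimate (or majorize the whole recursion by an explicit analytic model), concluding that the series converges absolutely and uniformly for $|y'-y'_0|<\rho_1$, $0<t<\rho_2$ to a function $v$ that is analytic in $(y',t,\zeta)$, lies in $C^1$ up to $\{t=0\}$ with $v|_{t=0}=\varphi$, obeys $|Dv|+t|D^2v|<C$, and — being a genuinely convergent series solving the equation order by order — solves \eqref{fe.2} on $G_{\rho_1,\rho_2}$.

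To identify $v$ with $u$ I would use Theorem \ref{thm-Main}, which gives $|u-u_k|\le C_k t^{k+\alpha}$ near $y'_0$; rerunning its proof while tracking the $k$-dependence — the point being that, when $\varphi$ is analytic, the coefficients and all the error terms in that argument inherit geometric growth — should give $C_k\le CM^k$ once the $\zeta$-repackaging removes any factorials. Since $v-u_k$ is the tail of the convergent series of the previous paragraph, $|v-u_k|\le C'\rho_2^{-k}t^{k+1}$ on $G_{\rho_1,\rho_2}$; choosing $\delta_0<\min\{\rho_2,M^{-1}\}$ and letting $k\to\infty$ forces $u=v$ on $G_{\rho_1,\delta_0}$. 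Because \eqref{fe.2} is real-analytic and uniformly elliptic on compact subsets of $G_{r,\delta}$ (convexity keeps $\{a^v_{ij}\}$ positive definite), Morrey's analyticity theorem makes $u$ real-analytic in $G_{r,\delta}$, so $u\equiv v$ on the open set $G_{\rho_1,\delta_0}$ propagates by analytic continuation to the whole connected set $G_{\rho_1,\rho_2}$. Carrying this out at every $y'_0\in B'_{r'}$, and then combining with the interior analyticity of $u$ and the uniform estimates of Theorem \ref{thm-Main} on $G_{r',\delta'}$, propagates the convergence to all of $G_{r',\delta'}$, with the stated uniformity, for every $0<r'<r$ and $0<\delta'<\delta$.

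The main obstacle is the combination of the majorant step with the $k$-tracking: organizing the recursion for $c_i,c_{i,j}$ of the fully nonlinear quotient equation $H_n/H_l=\sigma^{n-l}$ so that a clean majorant can actually be written down, and verifying that the iteration behind Theorem \ref{thm-Main} deteriorates only geometrically in $k$ once $\varphi$ is analytic. The degenerate factor $t$ and the quotient structure — as opposed to a single $H_k$, and in contrast to the quasilinear CMC case — make both the indicial analysis and the combinatorics of the source terms $\Phi_i$ delicate; that is precisely where the nonlinearity is felt most.
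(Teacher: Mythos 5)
Your plan is structurally different from the paper's, and it has a genuine gap at its core. The paper does not build an independent formal power series from $\varphi$ and then match it to $u$; it estimates the \emph{actual} solution, proving for $v=u-u_1$ the uniform analytic-growth bounds $|D_{y'}^l v|\le B_0B^{(l-1)^+}(l-1)!\,t^2(1-|y'|)^{-(l-1)^+}$ (and the corresponding bounds for $Dv$, $D^2v$) via Theorem 2.1 of \cite{HanJiang2}, and then invokes Theorem 4.1 of \cite{HanJiang2} to convert these into convergence of the expansion and analyticity in $y',t,t\log t$. In your proposal that hard analytic content is exactly the step you wave at with ``rerunning the proof of Theorem \ref{thm-Main} while tracking the $k$-dependence \dots should give $C_k\le CM^k$''; nothing in the proof of Theorem \ref{thm-Main} gives geometric growth in $k$ for free, and establishing such uniform-in-$k$ (equivalently, factorial-in-$l$ tangential-derivative) control is precisely the main work of the convergence theorem.

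More fundamentally, your majorant construction cannot produce ``the'' expansion of $u$ from $\varphi$ alone, because the expansion is not locally determined by the boundary data: the recursion $P(i)c_i=\Phi_i$ degenerates at $i=n+1$ (that is why the $\log$ term appears), and while $c_1,\dots,c_n,c_{n+1,1}$ are explicit in $\varphi$, the coefficient $c_{n+1,0}$ (and hence all higher coefficients) is a free, global datum read off from $u$ itself — in Theorem \ref{th6.1} these coefficients are extracted from the solution via the ODE/integration scheme, not from $\varphi$. Consequently there is a family of formal (and, if convergent, actual local) solutions with the same $\varphi$, so even granting your majorant estimates you obtain some analytic solution $v$, with no local uniqueness statement to force $v=u$; and your tail comparison silently identifies the partial sums of your constructed series with the $u_k$ of Theorem \ref{thm-Main}, which already contain the non-local coefficients of $u$'s own expansion. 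The analyticity in $y'$ of those non-local coefficients is a conclusion of the theorem, not an input you can feed into a majorant argument. To repair the argument you would essentially have to do what the paper does: prove analytic-type bounds on all tangential derivatives of $v=u-u_1$ directly and then deduce convergence of the expansion of $u$ itself.
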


\section{Preliminaries}\label{sec-Exp}
The main purpose of this section is to derive the formula for hyperbolic principal curvatures and to construct explicit solutions of constant curvature.
These explicit solutions will serve as barrier functions later. Readers who are familiar with hyperbolic geometry can skip this section.
\subsection{Formulas for hyperbolic principal curvatures}
\label{fml}
Let $\Sigma$ be a hypersurface in $\mH^{n+1}.$ We shall use $g$ and $\nabla$ to denote the induced
hyperbolic metric and Levi-Civita connections on $\Sigma$, respectively. Since $\Sigma$ also can
be viewed as a submanifold of $\R^{n+1}$, we will usually identify a geodesic quantity
with respect to the Euclidean metric by adding a superscript `` $E$ " over the corresponding
hyperbolic quantity. For instance, $g^E$ denotes the induced metric on $\Sigma$ from $\R^{n+1}$
and $\nabla^E$ is its Levi-Civita connection.

Let $(z_1, \cdots, z_n)$ be local coordinates and
\[\tau_i=\frac{\partial}{\partial z_i}, \,\, i=1, \cdots, n.\]
The hyperbolic and Euclidean metrics of $\Sigma$ are given by
\[g_{ij}=\lt<\tau_i, \tau_j\rt>_H,\,\,g^E_{ij}=\tau_i\cdot\tau_j=x_{n+1}^2g_{ij};\]
while the second fundamental forms are
\[h_{ij}=\lt<\nabla_{\tau_i}\tau_j, \nu_H\rt>_H,\,\,h^E_{ij}=\nabla^E_{\tau_i}\tau_j\cdot\nu,\]
where $\nu$ is the Euclidean normal to $\Sigma$ and $\nu_H=\frac{\nu}{x_{n+1}}$ is the hyperbolic normal.
The following relations are well known:
\be\label{fml.1}h_{ij}=\frac{1}{x_{n+1}}h^E_{ij}+\frac{\nu^{n+1}}{x_{n+1}^2}g^E_{ij}\ee
and
\be\label{fml.2}\kappa_i=x_{n+1}\kappa^E_i+\nu^{n+1},\ee
where $\nu^{n+1}=\nu\cdot e_{n+1}.$

Now suppose $\Sigma$ can be represented as the graph of a function $f\in C^2(\Omega),$
$f > 0$, in a domain $\Omega\subset\R^n:$
\[\Sigma = \{(x, f(x))\in\R^{n+1}: x\in\Omega\}.\]
In this case we take $\nu$ to be the upward (Euclidean) unit normal vector field to $\Sigma,$ then we have
\[\nu=\frac{(-Df, 1)}{w_f},\,\,w_f=\sqrt{1+|Df|^2}.\]
The Euclidean metric and second fundamental form of $\Sigma$ are given respectively by
\[g^E_{ij}=\delta_{ij}+f_if_j\]
and
\[h^E_{ij}=\frac{f_{ij}}{w_f}.\]
The Euclidean principal curvatures $\kappa^E[\Sigma]$ are the eigenvalues of the
symmetric matrix $A^E[f] =\{a^E_{ij}\}$:
\[a^E_{ij}=\frac{1}{w_f}\gamma^{ik}f_{kl}\gamma^{lj},\]
where $\gamma^{ij}=\delta_{ij}-\frac{f_if_j}{w_f(1+w_f)}.$
From \eqref{fml.2}, we can see that the
hyperbolic principal curvatures $\kappa_i[\Sigma]$ are the eigenvalues of the matrix
$A^v[f]=\{a^v_{ij}\}:$
\[ a^v_{ij} = \frac{1}{w_f}(\delta_{ij}+ f \gamma_f^{ik}f_{kl}\gamma_f^{lj}).\]

After rewriting this $\Sigma$ locally as a graph of a new function $u$ over the vertical plane, i.e.
\[\Sigma=\{(y', u(y', t), t)| y=(y', t)\in\R^{n-1}\times\{0\}\times\R\},\]
we have
\[\nu=\frac{(-u_{y'}, 1, -u_t)}{w} \,\,\mbox{and $h_{ij}=\frac{u_{ij}}{w}$}.\]
Therefore, under this coordinates
\[a_{ij}^v=\frac{t}{w}\gamma^{ik}u_{kl}\gamma^{lj}-\frac{u_t}{w}\delta_{ij}.\]

\subsection{Explicit Solutions on Balls}
\label{esb}
In this subsection, we will construct sub and super solutions to the equations \eqref{fe.2} and \eqref{fe.2'}.
Our construction based on following well known fact: Let $B_R(a)$ be a ball of
radius $R$ centered at $a = (a', \pm\sigma R)$ in $\mathbb{R}^{n+1}$ where $a'\in\mathbb{R}^n$,
then by equation \eqref{fml.2}, $S =B_R(a)\cap\mathbb{H}^{n+1}$
has constant hyperbolic principal curvature $\sigma$ with respect to its upward normal.

Let's recall a comparison Lemma in \cite{GS00}:
\begin{lemma}\label{lem2.1}
Let $B_1$ and $B_2$ be balls in $\mathbb{R}^{n+1}$ of radius $R$ centered at $a =(a',-\sigma R)$
and $b = (b', \sigma R),$ respectively. Let $\Sigma$ be a hypersurface satisfies equation \eqref{eq-Main}.\\
(i) If $\partial\Sigma\subset B_1,$ then $\Sigma\subset B_1$.\\
(ii) If $B_1\cap\mathbb{R}^n\times\{0\}\subset\Omega$, then $B_1\cap\Sigma =\emptyset .$\\
(iii) If $B_2\cap\Omega=\emptyset$, then $B_2\cap \Sigma =\emptyset.$
\end{lemma}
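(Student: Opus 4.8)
The proof rests on two facts. First, \eqref{fml.2} shows at once that for every $\rho>0$, $c'\in\R^{n}$ and $h\in\R$, the cap $\partial B_\rho(c',h)\cap\mH^{n+1}$ has all hyperbolic principal curvatures equal to $-h/\rho$ with respect to the outward Euclidean normal; in particular $\partial B_\rho(c',-\sigma\rho)\cap\mH^{n+1}$, and the sub‑equatorial band of $\partial B_\rho(c',\sigma\rho)\cap\mH^{n+1}$, are themselves solutions of \eqref{eq-Main}. Second, on the positive cone the map $A\mapsto\phi(\kappa[A])=H_n(\kappa[A])/H_l(\kappa[A])$ is strictly monotone: $0<A\le B$ implies $\phi(\kappa[A])\le\phi(\kappa[B])$, with equality only if $A=B$ (this is the ellipticity of $H_n/H_l$). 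Combined with the strong maximum principle, the Hopf boundary lemma, and unique continuation for \eqref{eq-Main} (uniformly elliptic in the interior, away from $\partial\Omega$), these give the lemma. Parts (i) and (ii) are one‑sided comparisons of $\Sigma$ against the explicit cap of $B_1$; part (iii), where the relevant cap of $B_2$ is not globally a graph, needs a sweeping argument.

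For (i): since $\Gamma=\partial\Sigma\subset B_1\cap\{x_{n+1}=0\}=:D_1$, we have $\Omega\subset D_1$, and $S_1:=\partial B_1\cap\mH^{n+1}$ is the graph over $\bar D_1$ of the concave function $g(x')=-\sigma R+\sqrt{R^2-|x'-a'|^2}\ge0$, $g\equiv0$ on $\partial D_1$. I claim $f\le g$ on $\bar\Omega$, which is exactly the statement that $\Sigma$ lies below $S_1$, hence in $B_1$. If not, $m:=\max_{\bar\Omega}(f-g)>0$ is attained at an interior $x_0\in\Omega\subset D_1$ (on $\Gamma$, $f-g=-g\le0$). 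Now $g+m$ is the graph, over a disk about $x_0$, of the upper part of $\partial B_R(a',-\sigma R+m)$, whose hyperbolic curvatures equal $\sigma-m/R<\sigma$; and $f\le g+m$ with first‑order contact at $x_0$, so the gradients agree there, $f(x_0)=(g+m)(x_0)>0$, and $D^2f(x_0)\le D^2(g+m)(x_0)$. Hence $A^v[f](x_0)\le A^v[g+m](x_0)$ as symmetric matrices, and monotonicity of $\phi(\kappa[\,\cdot\,])$ forces $\sigma^{n-l}=\phi(\kappa[\Sigma])(x_0)\le(\sigma-m/R)^{n-l}<\sigma^{n-l}$ (if $m\ge\sigma R$ the right‑hand matrix is already $\le0$, contradicting $A^v[f]>0$), a contradiction. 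Part (ii) is the mirror image: now $D_1:=B_1\cap\{x_{n+1}=0\}\subset\Omega$, on $\partial D_1$ one has $f>0=g$, and if $\min_{\bar D_1}(f-g)=-\mu<0$ is attained at an interior $x_0\in D_1$, comparing $f$ with the graph $g-\mu$ of $\partial B_R(a',-\sigma R-\mu)$, of curvatures $\sigma+\mu/R>\sigma$, gives $\sigma^{n-l}\ge(\sigma+\mu/R)^{n-l}>\sigma^{n-l}$. Thus $f\ge g$ on $\bar D_1$, so $\Sigma$ lies above $S_1$ over $D_1$; as $B_1$ lies in $\{x_{n+1}<0\}$ over $\{|x'-a'|>R\sqrt{1-\sigma^2}\}$, we conclude $B_1\cap\Sigma=\emptyset$.

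For (iii), suppose $B_2\cap\Sigma\neq\emptyset$. From $B_2\cap\Omega=\emptyset$ one gets $\mathrm{dist}(b',\bar\Omega)\ge R\sqrt{1-\sigma^2}>0$. Consider the nested family $B^{(\rho)}:=B_\rho(b',\sigma\rho)$, $0<\rho\le R$, whose sub‑equatorial bands solve \eqref{eq-Main}: for $\rho$ small, $B^{(\rho)}$ is a tiny ball near $(b',0)$ disjoint from the compact set $\bar\Sigma$, while $B^{(R)}=B_2$ meets $\Sigma$, so there is a first $\rho_0\in(0,R)$ at which $\overline{B^{(\rho_0)}}$ touches $\bar\Sigma$ while the open ball $B^{(\rho_0)}$ does not. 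A contact point $P$ lying in $\Gamma$ would satisfy $|x'_P-b'|=\rho_0\sqrt{1-\sigma^2}<R\sqrt{1-\sigma^2}$, i.e. $P\in B_2\cap\bar\Omega=\emptyset$; so $P$ is an interior point of $\Sigma$, tangent there to $\partial B^{(\rho_0)}$ and lying in its closed exterior, and — since $\Sigma$ is a graph, whose tangent plane at $P$ is non‑vertical — not on the equator $\{x_{n+1}=\sigma\rho_0\}$. If $P$ lies below the equator, then near $P$ both $\Sigma$ and the band of $\partial B^{(\rho_0)}$ are graphs $x_{n+1}=f(x')$ and $x_{n+1}=\psi(x'):=\sigma\rho_0-\sqrt{\rho_0^2-|x'-b'|^2}$, both solving \eqref{eq-Main}, with $f\le\psi$ and equality at $x'_P$; the strong maximum principle and unique continuation give $f\equiv\psi$ on the component $\Omega_0$ of $\Omega$ containing $x'_P$. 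But $f=\psi>0$ on $\Omega_0$ forces $\Omega_0\subset\{|x'-b'|>\rho_0\sqrt{1-\sigma^2}\}$, while the non‑verticality of $\Sigma$ keeps $\bar\Omega_0$ off $\{|x'-b'|=\rho_0\}$, so $\partial\Omega_0\subset\Gamma$ and $f=\psi=0$ there forces $\partial\Omega_0\subset\{|x'-b'|=\rho_0\sqrt{1-\sigma^2}\}$; then $\Omega_0$ is the round ball this sphere bounds, contradicting the previous inclusion. If $P$ lies above the equator, then the center $(b',\sigma\rho_0)$ and the hyperbolically convex body $K$ below $\Sigma$ lie on the same side of the common tangent hyperplane at $P$, so the hyperbolically convex ball $B^{(\rho_0)}\cap\mH^{n+1}$ meets $\mathrm{int}\,K$; since $\partial K\cap B^{(\rho_0)}=\emptyset$ (because $\Sigma\cap B^{(\rho_0)}=\emptyset$ and $B^{(\rho_0)}\cap\{x_{n+1}=0\}\subset D_2$ is disjoint from $\bar\Omega$) and $B^{(\rho_0)}\cap\mH^{n+1}$ is connected, $B^{(\rho_0)}\cap\mH^{n+1}\subset\mathrm{int}\,K$, whence $(b',\sigma\rho_0)\in K$, i.e. $b'\in\bar\Omega$ — contradicting $\mathrm{dist}(b',\bar\Omega)>0$. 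Hence $B_2\cap\Sigma=\emptyset$.

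The main work is in part (iii): one must arrange the sweep so that the first contact is an interior tangency, and then dispose of both positions of the contact point on the sphere — the sub‑equatorial one by a maximum‑principle / unique‑continuation propagation that exploits the boundary condition $f|_{\partial\Omega}=0$, the super‑equatorial one by the convexity of $\Sigma$. Parts (i) and (ii), by contrast, reduce to a single application of the maximum principle against the explicit spherical barriers together with the monotonicity of $\phi$.
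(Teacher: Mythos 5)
The paper itself does not prove this lemma --- it is quoted from Guan--Spruck \cite{GS00} --- so there is no in-paper argument to compare against; I will assess your proof on its own. Your strategy (comparison with the exact spherical caps via the strict monotonicity of $H_n/H_l$ on the positive cone for (i) and (ii), and a shrinking-sphere sweep reducing (iii) to an interior tangency) is the standard one and is essentially what the cited source does. Parts (i) and (ii) are correct as written, including the correct treatment of the degenerate case $m\ge\sigma R$ where the comparison matrix leaves the positive cone. The sub-equatorial branch of (iii) is a legitimate tangency-principle-plus-continuation argument; its final topological step is slightly misstated (``$\Omega_0$ is the round ball this sphere bounds'' should be: a bounded connected open set lying in the exterior of that sphere cannot have its entire boundary on the sphere, so $\Omega_0$ would be empty), but the contradiction is genuinely there.

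The one genuine gap is the super-equatorial branch of (iii). You justify ``$B^{(\rho_0)}\cap\mH^{n+1}$ meets $\mathrm{int}\,K$'' by asserting that the ``hyperbolically convex body $K$ below $\Sigma$'' lies on one side of the Euclidean tangent hyperplane at $P$. Admissibility for \eqref{eq-Main} only gives positive \emph{hyperbolic} principal curvatures, which does not imply Euclidean convexity of $\Sigma$ (the sub-equatorial cap $\psi^-$ itself is a bowl whose subgraph is not convex), and hyperbolic convexity says nothing about one-sidedness with respect to Euclidean hyperplanes; nor does the lemma's hypothesis include the global convexity that Guan--Spruck establish for their particular solutions. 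Fortunately the step is salvageable without any convexity: at a super-equatorial contact the outward normal of $\partial B^{(\rho_0)}$ and the upward normal $\nu=(-Df,1)/w$ of $\Sigma$ at $P$ both have positive vertical component, hence coincide, and for small $\epsilon>0$ the point $P-\epsilon\nu$ lies simultaneously in $B^{(\rho_0)}$, in $\{x_{n+1}>0\}$, and in $\mathrm{int}\,K$, since its height is $f(x_P)-\epsilon/w$ while the graph above its horizontal projection has height $f(x_P)+\epsilon|Df|^2/w+O(\epsilon^2)$. With that substitution your connectedness argument closes the case and the proof is complete.
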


For a given boundary $\Gamma=\partial\Omega\subset\partial_\infty\mH^{n+1},$ we say that $\Gamma$ satisfies
the \textit{uniform interior (resp. exterior) local ball condition},
if there exists some $\alpha>0$ such that for all $Q\in\Gamma,$
there exists a ball $B^n_\alpha\subset\bar{\Omega}$ $(resp. B^n_\alpha\subset\bar{\Omega}^c)$, and $\partial B^n_\alpha\cap\partial\Omega=\{Q\}.$
In this paper, we always assume $\Gamma$ satisfies the uniform interior/exterior local ball condition.
In particular, let's denote $x=(x', x_n)$ the coordinates in $\mathbb{R}^{n+1}$, where
\begin{align*}
x^\prime = (x_1, x_2, \cdots, x_{n-1}).
\end{align*}
Assume $P=(x^\prime_P, s, 0)\in \mathbb{R}^n \times \{0\}$ and $B^n_\alpha(P)\subset\mathbb{R}^n$ be an Euclidean ball centered at $P$ with radius $\alpha$ in $\partial_\infty\mathbb{H}^{n+1}$.
The explicit solution over domain $B^n_\alpha(P)$ of equation \eqref{eq-Main} is the following
\begin{align*}
f_\alpha(x)=  \sqrt{R^2 - |x^\prime-x_P^\prime |^2-|x_n- s|^2} -\sigma R,
\end{align*}
where $R>0$ satisfies $\alpha^2+ \sigma^2 R^2= R^2$. The graph of $f_\alpha$ is a portion of the sphere centered at $(x^\prime_p, s,-\sigma R)$ with radius $R$ in $\mathbb{R}^{n+1}$.

Now, if the ball $B^n_\alpha(P)\subset\bar{\Omega}$ is tangential to a boundary point $Q= (x_Q^\prime, \varphi(x_Q^\prime)) \in            \partial \Omega$ in the neighborhood of origin $O \in \partial \Omega$.  Then, we can see that the inner normal vector to $\partial\Omega$ at $Q$ is $N= (\frac{D\varphi}{\sqrt{1+|D\varphi|^2}}, \frac{-1}{\sqrt{1+|D\varphi|^2}}, 0)$. Let $P= Q + \alpha N$, we have
\begin{align*}
f_\alpha(x)&=  \sqrt{R^2 - \left|x^\prime-x_Q^\prime -\frac{\alpha D\varphi}{\sqrt{1+|D\varphi|^2}}(x_Q^\prime)\right|^2-\left|x_n- \varphi(x_Q^\prime) + \frac{\alpha}{\sqrt{1+|D\varphi|^2}}(x_Q^\prime)\right|^2}\\
&\qquad -\sigma R.
\end{align*}
Then the corresponding $u_\alpha$ on the vertical plane $\{x_n=0\}$ is
\begin{align*}
u_R(y)=u_R(y', t)&=\sqrt{R^2- (t+\sigma R)^2 - \left|y^\prime-y_Q^\prime +\frac{\alpha D\varphi}{\sqrt{1+|D\varphi|^2}}(y_Q^\prime)\right|^2}\\
&\qquad +\left( -\frac{\alpha}{\sqrt{1+|D\varphi|^2}}(y_Q^\prime)+ \varphi (y_Q^\prime) \right).
\end{align*}
If we only consider at points where $y^\prime= y^\prime_Q$, we have
\begin{align*}
u_R(y)&=\varphi+\sqrt{R^2- (t+\sigma R)^2 - \left|\frac{\alpha D\varphi}{\sqrt{1+|D\varphi|^2}}\right|^2} -\frac{\alpha}{\sqrt{1+|D\varphi|^2}},
\end{align*}
which equals $\varphi$ when $t=0$. Furthermore, it's easy to check, in this case,
\begin{align}\label{eq-SolOnBall}
u_R(y)= \varphi  - \sqrt{\frac{\sigma^2 (1+|D\varphi|^2 )}{1-\sigma^2} }t+ t^2 F(y, R, \alpha)
\end{align}
where $F(y, R, \alpha)$ is uniformly bounded.
By Lemma \ref{lem2.1}, we conclude that $u_R\leq u$ in the neighborhood of the origin.

Similarly, if the ball $B^n_\alpha(P)\subset\mathbb{R}^n\backslash\Omega$ is tangential to $\partial\Omega$
from outside, then in a small neighborhood of the origin, we get
\begin{align}\begin{split}\label{eq-ExpBall}
u^R(y)&=-\sqrt{R^2- (t-\sigma R)^2 - \left|y'-y'_Q+\frac{\alpha D\varphi}{\sqrt{1+|D\varphi|^2}}(y^\prime)\right|^2}\\
&\qquad + \frac{\alpha}{\sqrt{1+|D\varphi|^2}}(y^\prime)+ \varphi(y^\prime).
\end{split}
\end{align}
Moreover, when $y'=y_Q'$ we can see that
\be\label{eq-SolOnBall'}
u^R(y)= \varphi-\sqrt{\frac{\sigma^2 (1+|D\varphi|^2 )}{1-\sigma^2} }t+ t^2 F'(y, R, \alpha),
\ee
where $F'(y, R, \alpha)$ is uniformly bounded.
By Lemma \ref{lem2.1}, we conclude that $u^R\geq u$ in the neighborhood of the origin.

$u_R$ and $u^R$ will serve as barrier functions in the proof of Lemma \ref{lem-T2}.

\section{Formal Expansions}
\label{fe}
In this section, we derive expansions for solution to the equations \eqref{fe.2} and \eqref{fe.2'}.
For our convenience, we will rewrite equation \eqref{fe.2} as follows:
\be\label{fe.3}
\det(t\gamma^{ik}u_{kl}\gamma^{lj}-u_t\delta_{ij})=\sigma^{n-l}w^{n-l}H_l(\kappa[t\gamma^{ik}u_{kl}\gamma^{lj}-u_t\delta_{ij}]).
\ee
We denote equation \eqref{fe.3} by $Q(u)=\bQ(D^2u, Du, t)=0$, where
\begin{align*}
Q(u)=\bQ(D^2u, Du, t)= \det(t\gamma^{ik}u_{kl}\gamma^{lj}-u_t\delta_{ij})-\sigma^{n-l}w^{n-l}H_l(\kappa[t\gamma^{ik}u_{kl}\gamma^{lj}-u_t\delta_{ij}]).
\end{align*}

Now, set $u_N=\varphi+\sum_{i=1}^Nc_it^i,$ where $\varphi$ as explained in Section \ref{sec0}, only relies on the tangential coordinates and its graph defines the boundary locally. Our goal is to solve for the tangential coefficients $c_i$'s inductively such that $Q(u_N)= O(t^{N})$.

 By differentiating $u_N$ twice we get,
\be\label{fe.01}Du_N=(D\varphi+ c_1 Dt) +\sum_{i=2}^{N}(Dc_{i-1} +ic_{i} Dt)t^{i-1}+ Dc_N \cdot t^N,\ee
and
\be\label{fe.02}
D^2u_N=D^2 \varphi+\sum_{i=2}^N  i(i-1)c_it^{i-2}+\cdots.
\ee
In $``\cdots"$, when $c_i$ appears, it's of the form $Dc_i, D^2c_i$ and it has a factor $t^{i-1},$ $t^i$ respectively.
Substituting equations \eqref{fe.01} and \eqref{fe.02} into equation \eqref{fe.3} we get the following.

First, by calculating the coefficients of the $O(1)$ term, we get
\be\label{fe.4}
\lhs :=(-c_1)^n,
\ee
and
\be\label{fe.5}
\rhs :=\sigma^{n-l}(1+|D\varphi|^2+c_1^2)^{\frac{n-l}{2}}(-c_1)^l.
\ee
Therefore we obtain
\[(-c_1)^{n-l}=\sigma^{n-l}(1+|D\varphi|^2+c_1^2)^{\frac{n-l}{2}},\]
which yields
\be\label{fe.6}
c_1=-\sqrt{\frac{\sigma^2(1+|D\varphi|^2)}{1-\sigma^2}},
\ee
here $c_1<0$ because of our choice of orientation. Moreover, we have $Q(\varphi+c_1t)=O(t).$

Next, we will look at terms of order $O(t).$ We collect those containing $c_2$ on the $\lhs$ of \eqref{fe.3},
\be\label{fe.7}
\begin{aligned}
\lhs :&=[-2c_2t\delta_{ii}+t\gamma^{it}\cdot 2c_2 \cdot\gamma^{ti}](-c_1)^{n-1}\\
&=[-2c_2tn+2c_2t(1-\frac{u_t^2}{w^2})](-c_1)^{n-1}\\
&=-2c_2t(-c_1)^{n-1}\lt(n-1+\frac{u_t^2}{w^2}\rt)\\
&=-2c_2t(-c_1)^{n-1}\lt(n-\frac{1+|D\varphi|^2}{1+|D\varphi|^2+c_1^2}\rt)\\
&=-2c_2t(-c_1)^{n-1}\lt(n-(1-\sigma^2)\rt).
\end{aligned}
\ee
The rest $O(t)$ terms on the $\lhs$ relies on $\varphi, c_1$ and their derivatives.

On the $\rhs$, we have
\begin{align*}
w^{n-l}&=(1+|Du|^2)^\frac{n-l}{2}\\
&= (1+|D\varphi|^2+c_1^2 +2 D\varphi Dc_1 t +4c_1 c_2 t+\cdots)^\frac{n-l}{2}\\
&= (1+|D\varphi|^2+c_1^2)^\frac{n-l}{2} + (1+|D\varphi|^2+c_1^2)^\frac{n-l-2}{2} \cdot \frac{n-l}{2} (4c_1 c_2 t  +2 D\varphi Dc_1 t)\\
&\qquad+\cdots,
\end{align*}
where "$\cdots$" denotes higher order terms than $t$. Notice $H_l(\kappa[A_{ij}])$ equals the sum of all $l \times l$ submatrices of $A$ that have same rows and columns. Hence,
\begin{align*}
H_l(\kappa[t\gamma^{ik}u_{kl}\gamma^{lj}-u_t\delta_{ij}])&=  (-c_1)^l+(-c_1)^{l-1}\cdot (-2c_2 lt+ t\gamma^{it}u_{tt}\gamma^{ti}\frac{l}{n})+\cdots\\
&=   (-c_1)^l+(-c_1)^{l-1}\cdot \lt[-2c_2lt+ 2c_2 t \cdot (1-\frac{u_t^2}{w^2}) \frac{l}{n}\rt]+ \cdots,
\end{align*}
where "$\cdots$" denotes order $t$ terms without $c_2$ factor and higher order terms. So the $O(t)$ terms containing $c_2$ in the $\rhs$ of \eqref{fe.3} are
\be\label{fe.8}
\begin{aligned}
\rhs :&=(1+|D\varphi|^2+c_1^2)^{\frac{n-l}{2}}\sigma^{n-l}\lt[-2c_2lt+ 2c_2 t \cdot (1-\frac{u_t^2}{w^2}) \frac{l}{n}\rt](-c_1)^{l-1}\\
&\qquad+(4c_1c_2t)\cdot\frac{n-l}{2}\sigma^{n-l}(-c_1)^l(1+|D\varphi|^2+c_1^2)^\frac{n-l-2}{2}\\
&=(-c_1)^{n-1}\lt[-2c_2tn+2c_2t \cdot(1-\sigma^2)\rt]\cdot\frac{l}{n}\\
&\qquad-2(n-l)c_2t (-c_1)^{n-1}\sigma^2.
\end{aligned}
\ee

Combining \eqref{fe.7} and \eqref{fe.8} we get
\be\label{fe.9}
\begin{aligned}
&\qquad \lhs- \rhs\\
&=-2c_2t(-c_1)^{n-1}\lt(n-(1-\sigma^2)\rt)\\
&\qquad +2c_2t(-c_1)^{n-1}(n-(1-\sigma^2))\cdot\frac{l}{n}+2(n-l)c_2t (-c_1)^{n-1}\sigma^2\\
&=2c_2t(-c_1)^{n-1}\lt(1-\sigma^2 \rt) \frac{n-l}{n}(1-n)
\end{aligned}
\ee
Therefore
\begin{align*}
Q(u_2)&= (2c_2(-c_1)^{n-1}\lt(1-\sigma^2 \rt) \frac{n-l}{n}(1-n)t\\
&\qquad+ F_2(x, \varphi,  D\varphi, D^2\varphi, c_1, D^i c_1))t +O(t^2),
\end{align*}
 we can solve for $c_2$ such that $Q(u_2)=O(t^2)$.

Finally, we consider the terms of order $O(t^{k-1})$. Note that the terms containing $c_k$ have at least a $t^{k-1}$ factor, and those containing derivatives of $c_k$ have at least a $t^k$ factor. We collect all $O(t^{k-1})$ terms containing $c_k$.
\be\label{fe.10}
\begin{aligned}
\lhs :&=\lt\{-kc_kt^{k-1}\delta_{ii}+t\gamma^{it}[k(k-1)c_kt^{k-2}]\gamma^{ti}\rt\}(-c_1)^{n-1}\\
&=(-c_1)^{n-1}kc_k t^{k-1}\lt[-n+(k-1)\lt(1-\frac{c_1^2}{1+|D\varphi|^2+c_1^2}\rt)\rt];\\
&=(-c_1)^{n-1}kc_k t^{k-1} (-n+(1-\sigma^2) (k-1));\\
\end{aligned}
\ee

On the $\rhs$, similarly we have
\begin{align*}
w^{n-l}&=(1+|Du|^2)^\frac{n-l}{2}\\
&= (1+|D\varphi|^2+c_1^2 +\cdots+2kc_k t^{k-1}c_1+\cdots)^\frac{n-l}{2}\\
&= (1+|D\varphi|^2+c_1^2)^\frac{n-l}{2} +\cdots+ (1+|D\varphi|^2+c_1^2)^\frac{n-l-2}{2} \cdot \frac{n-l}{2} (2kc_k t^{k-1}c_1)\\
&\qquad+\cdots,
\end{align*}
where "$\cdots$" denotes higher order terms than $O(1)$ which either does not contain $c_k$ or has higher order than $t^{k-1}$. And
\begin{align*}
H_l(\kappa[t\gamma^{ik}u_{kl}\gamma^{lj}-u_t\delta_{ij}])&=  (-c_1)^l+ (-c_1)^{l-1} \cdot(-kc_k lt^{k-1}+ t\gamma^{it}u_{tt}\gamma^{ti})+\cdots\\
&=   (-c_1)^l+(-c_1)^{l-1}\cdot \lt[-kc_klt^{k-1}+ k(k-1)c_k t^{k-1} \cdot (1-\frac{u_t^2}{w^2}) \frac{l}{n}\rt]\\
&\qquad+ \cdots,
\end{align*}
where "$\cdots$" denotes higher order terms than $O(1)$ which either does not contain $c_k$ or has higher order than $t^{k-1}$. So the $O(t^{k-1})$ terms containing $c_k$ in the $\rhs$ of \eqref{fe.3} are
\be\label{fe.11}
\begin{aligned}
\rhs :
&=(1+|D\varphi|^2+c_1^2)^{\frac{n-l}{2}}\sigma^{n-l}\lt[-kc_klt^{k-1}+ k(k-1)c_k t^{k-1} \cdot (1-\frac{u_t^2}{w^2}) \frac{l}{n}\rt](-c_1)^{l-1}\\
&\qquad+(2kc_1c_kt^{k-1})\cdot\frac{n-l}{2}\sigma^{n-l}(-c_1)^l(1+|D\varphi|^2+c_1^2)^\frac{n-l-2}{2}\\
&=(-c_1)^{n-1}\lt[-kc_kt^{k-1}  n+k(k-1)c_kt^{k-1}  \cdot(1-\sigma^2)\rt]\cdot\frac{l}{n}\\
&\qquad-k(n-l)c_kt^{k-1} (-c_1)^{n-1}\sigma^2.
\end{aligned}
\ee

Combining \eqref{fe.10} and \eqref{fe.11} we get
\be\label{fe.12}
\begin{aligned}
&\qquad \lhs- \rhs\\
&=-kc_kt^{k-1}(-c_1)^{n-1}(n-(1-\sigma^2) (k-1))\\
&\qquad +kc_kt^{k-1}(-c_1)^{n-1}(n-(1-\sigma^2)(k-1) )\cdot\frac{l}{n}+k(n-l)c_kt^{k-1}(-c_1)^{n-1}\sigma^2\\
&=kc_kt^{k-1}(-c_1)^{n-1}\lt(1-\sigma^2 \rt) \frac{n-l}{n}(k-1-n)
\end{aligned}
\ee
Therefore
\begin{align*}
Q(u_k)&= [ c_k   k(-c_1)^{n-1}\lt(1-\sigma^2 \rt) \frac{n-l}{n}(k-1-n)\\
&\qquad+ F_k(x, \cdots)]t^{k-1} +O(t^k),
\end{align*}
where $F_k$ is a function of $x, \varphi, c_1,\cdots, c_{k-1}$ and their tangential directives.
 We can solve for $c_k$ for $1\leq k \leq n$ such that $Q(u_k)=O(t^k)$.

 Above procedure fails at $k=n+1$ since the coefficient of $c_{n+1}$ would be $0$. We introduce a term $c_{n+1,1}t^{n+1}\log t$, and assume
\begin{align}
u^* =u_n+ c_{n+1,1}t^{n+1}\log t.
 \label{eq-VStar}
\end{align}
 Then we have
 \begin{lemma}
 \label{felm.1}
 Let $p\geq n+3,$ then for any $\varphi\in C^p(B_r'),$ there exist $c_i\in C^{p-i}(B_r'),$ $1\leq i\leq n$ and $c_{n+1, 1}\in C^{p-n-1}(B_r')$
 such that $u^*$ defined in \eqref{eq-VStar} satisfies
 \be\label{fe.13}
 \lt|Q(u^*)\rt|\leq Ct^{n+1}(-\log t),
 \ee
 where $C$ is a positive constant depending only on $n$ and the $C^{n+3}$ norm of $\varphi.$
 \end{lemma}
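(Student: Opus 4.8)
The plan is to push the recursion of \eqref{fe.4}--\eqref{fe.12} one step past the obstruction it hits at $k=n+1$, absorbing that obstruction into the logarithmic coefficient $c_{n+1,1}$. First I would record the output of the first $n$ steps: from \eqref{fe.6} one has $c_1\in C^{p-1}(B_r')$, and by induction on $1\le i\le n$, $c_i\in C^{p-i}(B_r')$, each $c_i$ being obtained by dividing the forcing term $F_i$ — which depends on $\varphi$ and $c_1,\dots,c_{i-1}$ through finitely many tangential derivatives, with a loss compatible with the induction — by the nonzero factor $i\,(-c_1)^{n-1}(1-\sigma^2)\tfrac{n-l}{n}(i-1-n)$ of \eqref{fe.12}; this gives $Q(u_n)=\hat F(y')\,t^{\,n}+O(t^{\,n+1})$ with $\hat F\in C^{\,p-n-1}(B_r')$. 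Cancelling $\hat F t^n$ by a pure power $c_{n+1}t^{n+1}$ is impossible because, by \eqref{fe.12} with $k=n+1$, the leading contribution of $c_{n+1}t^{n+1}$ to $Q$ is $(n+1)c_{n+1}(-c_1)^{n-1}(1-\sigma^2)\tfrac{n-l}{n}\bigl((n+1)-1-n\bigr)t^{\,n}\equiv 0$; the term $c_{n+1,1}t^{n+1}\log t$ in \eqref{eq-VStar} is introduced precisely to turn this vanishing obstruction to our advantage.

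The heart of the argument is then the computation of $Q(u^*)-Q(u_n)$ with $v:=c_{n+1,1}(y')\,t^{n+1}\log t$. Write $Q(u^*)=Q(u_n)+L[v]+R$, where $L$ is the linearization of $Q$ at $u_n$. The remainder $R$ is quadratic in the pair $(t\,D^2v,\,Dv)$, since $D^2u$ enters $\bQ$ only through $t\,\gamma^{ik}u_{kl}\gamma^{lj}$; as $t\,v_{tt}$ and $v_t$ are both $O(t^{\,n}|\log t|)$, one gets $R=O\!\bigl(t^{\,2n}(\log t)^2\bigr)$, which for $n\ge 2$ is $O\!\bigl(t^{\,n+1}(-\log t)\bigr)$ after shrinking $\delta$ (the case $n=1$ is trivial: $B_r'$ is a single point, $\varphi$ is constant, and $u_1=\varphi+c_1 t$ already solves $Q=0$). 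For $L[v]$ the key observation is the identity $t^{n+1}\log t=\partial_\mu t^{\mu}\big|_{\mu=n+1}$, where $\partial_\mu$ denotes differentiation in the exponent. Since $\partial_\mu$ commutes with $\partial_t$ and with the linear operator $L$, the leading contribution of $v$ to $Q$ equals $c_{n+1,1}\,\partial_\mu\big|_{\mu=n+1}$ applied to the leading contribution of the pure power $t^\mu$, which by the computation \eqref{fe.10}--\eqref{fe.12} (valid verbatim for real $\mu$ in place of $k$) is $\mu\,(-c_1)^{n-1}(1-\sigma^2)\tfrac{n-l}{n}(\mu-1-n)\,t^{\mu-1}$. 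Differentiating, the $\log t$–piece carries the factor $\mu(\mu-1-n)\big|_{\mu=n+1}=0$ and drops out, leaving $L[v]=(n+1)\,c_{n+1,1}\,(-c_1)^{n-1}(1-\sigma^2)\tfrac{n-l}{n}\,t^{\,n}+O\!\bigl(t^{\,n+1}\log t\bigr)$.

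Putting the pieces together, one finds
\[
Q(u^*)=\Bigl[\hat F+(n+1)\,c_{n+1,1}\,(-c_1)^{n-1}(1-\sigma^2)\tfrac{n-l}{n}\Bigr]t^{\,n}+O\!\bigl(t^{\,n+1}(-\log t)\bigr),
\]
and since $c_1<0$, $\sigma\in(0,1)$, $l<n$, the bracket is an invertible affine relation for $c_{n+1,1}$. I would therefore \emph{define}
\[
c_{n+1,1}:=-\,\hat F\Big/\Bigl[(n+1)(-c_1)^{n-1}(1-\sigma^2)\tfrac{n-l}{n}\Bigr];
\]
as $(-c_1)^{n-1}=\bigl[\sigma^2(1+|D\varphi|^2)/(1-\sigma^2)\bigr]^{(n-1)/2}$ is a nowhere-vanishing $C^{p-1}$ function, this puts $c_{n+1,1}\in C^{\,p-n-1}(B_r')$, and it makes the $t^n$–coefficient vanish, so that $|Q(u^*)|\le C\,t^{\,n+1}(-\log t)$ on $G_{r,\delta}$ once $\delta<1/e$ (so that $-\log t\ge1$ absorbs the remaining $O(t^{n+1})$ terms). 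The dependence of $C$ on only $n$ and $\|\varphi\|_{C^{n+3}(B_r')}$, and the hypothesis $p\ge n+3$, enter here: the error terms involve at most two tangential derivatives of $c_{n+1,1}$, which cost $n+3$ derivatives of $\varphi$, and $p\ge n+3$ is exactly what makes $D^2_{y'}c_{n+1,1}$ — hence $Q(u^*)$ — well defined.

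The step I expect to be the real obstacle is the second paragraph: one must verify carefully that \emph{all} the $\log t$–contributions to $Q(u^*)$ at order $t^n$ — coming separately from the cofactor expansion of $\det(t\gamma^{ik}u_{kl}\gamma^{lj}-u_t\delta_{ij})$ and from the expansion of $w^{n-l}H_l$, and entering through both the $v_t$–terms and the $t\gamma^{it}v_{tt}\gamma^{ti}=t\bigl(1-u_t^2/w^2\bigr)v_{tt}$–terms — recombine into the single factor $(\mu-1-n)$, so that the formal rule ``contribution of $t^{n+1}\log t=\partial_\mu\big|_{\mu=n+1}$ of the contribution of $t^\mu$'' is legitimate for this fully nonlinear equation. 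The accompanying bookkeeping that pins down the precise Hölder classes $C^{p-i}$ of the $c_i$ and $C^{p-n-1}$ of $\hat F$, $c_{n+1,1}$ is routine but should be carried out with care.
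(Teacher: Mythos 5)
Your proposal is correct and follows essentially the same route as the paper: solve the recursion for $c_1,\dots,c_n$, then determine $c_{n+1,1}$ by forcing the $t^n$-coefficient of $Q(u^*)$ to vanish, the key points being that $c_{n+1,1}t^{n+1}\log t$ contributes a nonzero multiple of $(-c_1)^{n-1}(1-\sigma^2)\frac{n-l}{n}c_{n+1,1}$ at order $t^n$ while no $(\log t)^2$ terms arise at order $t^{n+1}$; your linearization-plus-quadratic-remainder and $\partial_\mu$ packaging is just a cleaner organization of the paper's direct expansion of the $\lhs$ and $\rhs$. The only discrepancy is the numerical factor: your $(n+1)$, coming from $(t\partial_t^2-n\partial_t)(t^{n+1}\log t)=(n+1)t^n$, appears to be the correct value, whereas the paper records $(n+2)$ (its non-logarithmic coefficient $2(n+1)$ in $\partial_t^2(t^{n+1}\log t)$ should be $2n+1$), but since both are nonzero this does not affect the argument.
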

 \begin{proof}
 We already derived $|Q(u_n)|\leq Ct^n$ with uniquely solved coefficients $c_0, \cdots, c_n$. For $c_{n+1,1}$, on the $\lhs$ of \eqref{fe.3}, the $O(t^n)$ terms containing $c_{n+1,1}$ are
 \begin{align*}
 (-c_1)^{n-1}c_{n+1,1}(-n+(1-\sigma^2) \cdot 2 (n+1))t^n
 \end{align*}
 on the $\rhs$ of \eqref{fe.3}, the $O(t^n)$ terms containing $c_{n+1,1}$ is
 \begin{align*}
 &(-c_1)^{n-1}\lt[-c_{n+1,1 }t^{n}  n+2(n+1)c_{n+1,1}t^{n}  \cdot(1-\sigma^2)\rt]\cdot\frac{l}{n}\\
&\qquad-(n-l)c_{n+1,1}t^{n} (-c_1)^{n-1}\sigma^2.
 \end{align*}
 The subtraction of these two are
 \begin{align*}
c_{n+1,1}t^{n}(-c_1)^{n-1}\lt(1-\sigma^2 \rt) \frac{n-l}{n}(n+2).
 \end{align*}

Therefore, by requiring the coefficient of $t^n$ to be zero in $Q(u^*)$ we have
 \[ (-c_1)^{n-1}\lt(1-\sigma^2 \rt) \frac{n-l}{n}(n+2) c_{n+1, 1}=F_{n+1, 1}(\varphi, c_1, c_2, \cdots, c_n, \sigma).\] Since $t^{n+1}(-\log t)^i$ for $i\geq 2$ does not appear in the formal computation,
 \[\lt|Q(u^*)\rt|\leq Ct^{n+1}(-\log t).\]
 \end{proof}

 \bigskip
 \section{Estimates of Local terms}
 \label{es}
\quad In this section, we derive an estimate for an expansion involving all local terms by the maximum principle.
We denote $y=(y', t)$ for points in $\R^n,$ with $y_n=t$, and set, for any $r, \delta>0,$
 \[G_{r,\delta}:=\{(y', t): |y'|<r, 0<t<\delta\}.\]
We consider
\be\label{eq4.1}
 \det(t\gamma^{ik}u_{kl}\gamma^{lj}-u_t\delta_{ij})-\sigma^{n-l}w^{n-l}H_l(\kappa[t\gamma^{ik}u_{kl}\gamma^{lj}-u_t\delta_{ij}])=0\,\,\mbox{in $G_{r, \delta}$},
\ee
and
\be\label{eq4.2}
u=\varphi\,\,\mbox{on $B'_r$.}
\ee

First, we derive a decay estimate by applying Lemma \ref{lem2.1}.

\begin{lemma}
\label{lem-T2}
Assume $\varphi(y^\prime)\in C^2(B_r')$ and let $u\in C(\bar{G}_{r, \delta})\cap C^\infty(G_{r,\delta})$ be a solution of the equations \eqref{eq4.1} and \eqref{eq4.2}. Then, for any $(y', t)\in G_{r_1, \delta_1},$ $0<r_1<r,$ $0<\delta_1<\delta,$ we have
\be\label{es.11}
|u-\varphi-c_1(y')t|\leq Ct^2,
\ee
where $c_1(y^\prime)=-\sqrt{\frac{\sigma^2(1+|D \varphi|^2)}{1-\sigma^2}},$ and $C=C(r, \delta, r_1, \delta_1, \partial \Omega)$.
\end{lemma}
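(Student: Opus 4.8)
The plan is to establish \eqref{es.11} by a barrier argument, using as barriers the explicit spherical solutions $u_R$ and $u^R$ constructed in Subsection~\ref{esb}, together with the comparison Lemma~\ref{lem2.1}. The key observation is that at a fixed tangential point $y'=y_Q'$ both barriers satisfy expansions \eqref{eq-SolOnBall} and \eqref{eq-SolOnBall'} with the \emph{same} linear coefficient $-\sqrt{\sigma^2(1+|D\varphi|^2)/(1-\sigma^2)} = c_1(y_Q')$ and a uniformly bounded quadratic remainder. So once we know $u_R \le u \le u^R$ near the origin (after translating the origin to the point where we want the estimate), we immediately get, along the vertical line through $y_Q'$,
\[
u_R(y_Q',t) - \varphi(y_Q') - c_1(y_Q')t \;\le\; u(y_Q',t)-\varphi(y_Q')-c_1(y_Q')t \;\le\; u^R(y_Q',t)-\varphi(y_Q')-c_1(y_Q')t,
\]
and the two outer terms are $O(t^2)$ with a constant controlled by $R,\alpha$, hence by $\partial\Omega$ and $\|\varphi\|_{C^2}$. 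Since $y_Q'$ is an arbitrary point in $B'_{r_1}$, and the geometric data ($R$, $\alpha$, the bound on $F$, $F'$) can be chosen uniformly over $\bar B'_{r_1}$, this yields \eqref{es.11} on $G_{r_1,\delta_1}$.

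First I would fix an arbitrary $y_0' \in B'_{r_1}$ and translate coordinates so that $y_0'$ corresponds to a boundary point $Q$ of $\Omega$; by the uniform interior and exterior local ball conditions there exist balls $B^n_\alpha(P_{\mathrm{int}})\subset\bar\Omega$ and $B^n_\alpha(P_{\mathrm{ext}})\subset\overline{\Omega}^c$ tangent to $\partial\Omega$ at $Q$, with $\alpha>0$ independent of $Q$. Next I would invoke Lemma~\ref{lem2.1}: part (ii) applied to the interior ball gives that the graph of $u$ stays outside the lower ball $B_1$, which (because the graph of $u_R$ is exactly the corresponding sphere cap and $u$ agrees with $\varphi$ on the boundary) translates into $u_R \le u$ in a neighborhood of $Q$; part (iii) applied to the exterior ball gives $u^R \ge u$ likewise. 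This is essentially the content of the two sentences "By Lemma~\ref{lem2.1}, we conclude that $u_R \le u$" and "$u^R \ge u$" in Subsection~\ref{esb}, so I would just cite those. Then I would plug in \eqref{eq-SolOnBall} and \eqref{eq-SolOnBall'}, restricted to $y'=y_Q'$, to sandwich $u-\varphi-c_1 t$ between $t^2 F$ and $t^2 F'$, and conclude by taking $C = \sup(|F|,|F'|)$ over the relevant compact parameter range.

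The main obstacle is bookkeeping rather than a conceptual difficulty: I must check that the neighborhood of $Q$ on which the barrier inequalities hold can be taken of a size bounded below uniformly in $y_0' \in \bar B'_{r_1}$ and in $0<t<\delta_1$, so that a single constant $C$ works for all of $G_{r_1,\delta_1}$; this uses the uniformity of $\alpha$ in the local ball condition and the smooth (indeed $C^2$) dependence of the barrier data on the tangential base point. A secondary point to be careful about is the coordinate change between the "vertical graph" description $u(y',t)$ and the original graph $f$ over $\Omega$: one has to verify that the sphere caps used as barriers for $f$ do transform into the functions $u_R$, $u^R$ written down in \eqref{eq-SolOnBall}, \eqref{eq-SolOnBall'}, and that the inequality $f_\alpha \le f \le f^\alpha$ is preserved under passing to the $\{x_n=0\}$ picture near the origin — but again this is exactly the computation already recorded in Subsection~\ref{esb}, so in the write-up I would reference it rather than redo it. Finally, I note that only $\varphi\in C^2$ is needed, consistent with the hypothesis of the lemma, since the barriers depend on $\varphi$ and $D\varphi$ at a single point together with the $C^2$-controlled geometry of $\partial\Omega$.
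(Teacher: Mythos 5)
Your proposal is correct and follows essentially the same route as the paper: fix an arbitrary tangential point, use the uniform interior/exterior local ball condition to produce the spherical barriers $u_R$ and $u^R$ of Subsection~\ref{esb}, apply the comparison Lemma~\ref{lem2.1} to get $u_R\leq u\leq u^R$, and read off the $O(t^2)$ bound from the expansions \eqref{eq-SolOnBall} and \eqref{eq-SolOnBall'}, with uniformity in the base point coming from the uniform radius $\alpha$. Your extra remarks on uniformity of the constant and on the change of coordinates are sound and only make explicit what the paper leaves implicit.
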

\begin{proof}
First, we show $u- \varphi- c_1(y^\prime)t \geq - C t^2$ for some $C>0$. For any point $y=(y', t)$ in $G_{r_1, \delta_1}$, we can find a boundary point $Q \in \partial G_{r_1, \delta_1} \cap \{t=0\}$ such that $y^\prime(Q)= y^\prime$. Then by the uniform interior local ball condition, there exists an interior ball $B_\alpha(P)$ tangent to $\partial \Omega$ at $Q$. By Lemma \ref{lem2.1} we have $f>f_\alpha$, and hence
\begin{align*}
u> u_R= \varphi+c_1(y^\prime)t+ t^2 F(y, R, \alpha)
\end{align*}
as shown at \eqref{eq-SolOnBall}.

For the other direction $u- \varphi- c_1(y^\prime)t \leq C t^2,$ similarly, there exists an exterior ball
$B_\alpha(P')$ tangent to $\partial\Omega$ at $Q.$ By Lemma \ref{lem2.1} and equation \eqref{eq-SolOnBall'}
we have
\begin{align*}
u< u^R= \varphi+c_1(y^\prime)t+ t^2 F'(y, R, \alpha).
\end{align*}
Therefore, \eqref{es.11} holds.
\end{proof}

Next, we prove an estimate for an expansion of solutions involving all the local terms by the maximum principle.
\begin{theorem}
\label{th4.1}
Assume $0<\delta<\frac{1}{2},$ $\varphi\in C^{n+3}(B_{\sqrt{\delta}}')$ and $u\in C(\bar{G}_{\sqrt{\delta}, \delta})\cap C^{\infty}(G_{\sqrt{\delta}, \delta})$
is a solution of equations \eqref{eq4.1} and \eqref{eq4.2}. Then for any $0<r_1<\sqrt{\delta},$ $0<\delta_1<\delta$
we have
\be\label{eq4.3}
|u-u^*|\leq Ct^{n+1}.
\ee
where $C=C(|\varphi|_{C^{n+3}(B'_{\sqrt{\delta}})}, \partial\Omega, \sigma).$
\end{theorem}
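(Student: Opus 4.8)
The plan is to run a comparison/maximum-principle argument using $u^*$ (the truncated formal expansion with the single log term, from \eqref{eq-VStar}) as an approximate solution, and to correct the residual $Q(u^*)=O\bigl(t^{n+1}(-\log t)\bigr)$ from Lemma \ref{felm.1} by adding explicitly chosen barrier perturbations. Concretely, I would set $v^{\pm}=u^*\pm\bigl(At^{n+1}+B t^{n+1}(-\log t)^2\bigr)$ (or a similar ansatz, possibly $At^{n+1}\pm Bt^{n+2}(-\log t)^2$ if the top power must match), with $A,B$ large constants to be fixed, and show $Q(v^+)\le 0\le Q(v^-)$ in $G_{r_1,\delta_1}$ for $\delta_1$ small, while on the parabolic-type boundary $\partial G_{r_1,\delta_1}$ one has $v^-\le u\le v^+$. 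A comparison principle for the operator $Q$ then forces $v^-\le u\le v^+$ throughout, which gives $|u-u^*|\le At^{n+1}+Bt^{n+1}(-\log t)^2$; since $(-\log t)^2$ is absorbed into a larger constant after shrinking $\delta_1$ (because $t^{n+1}(-\log t)^2\le C_\epsilon t^{n+1-\epsilon}$ is \emph{not} quite $\le Ct^{n+1}$), I actually expect the clean bound \eqref{eq4.3} to come from choosing the perturbation one power higher, i.e.\ controlling $|u-u^*|$ by $Ct^{n+1}$ directly via a barrier of the form $u^*\pm C t^{n+1}$ once one checks that plugging $t^{n+1}$ into the linearization of $Q$ at $u^*$ produces a coefficient with a definite (nonzero) sign times $t^n$, which dominates the $O\bigl(t^n(-\log t)\bigr)$ error — the key being that the indicial coefficient $(-c_1)^{n-1}(1-\sigma^2)\frac{n-l}{n}(k-1-n)$ from \eqref{fe.12} is \emph{negative} and bounded away from $0$ for $k=n+1$... wait, it vanishes at $k=n+1$; so in fact the relevant coefficient is the one for the $t^{n+1}$-perturbation, namely $k=n+2$ in \eqref{fe.12}, which gives $(-c_1)^{n-1}(1-\sigma^2)\frac{n-l}{n}\cdot 1>0$ up to sign — this nondegeneracy is exactly what makes the barrier work.

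The key steps, in order, are: (1) record the linearization $L:=DQ|_{u^*}$ as a second-order elliptic operator with leading part degenerating like $t$ at $\{t=0\}$, and compute the action of $L$ on the monomials $t^m$ and $t^m(-\log t)^j$, isolating the leading $t^{m-1}$ term with its explicit indicial coefficient — this is a direct transcription of the formal computation in Section \ref{fe}; (2) use Lemma \ref{felm.1} to write $Q(u^*)=E(y',t)$ with $|E|\le Ct^{n+1}(-\log t)$, and use the convexity/ellipticity structure (the matrix $t\gamma^{ik}u_{kl}\gamma^{lj}-u_t\delta_{ij}$ is positive definite, from the a priori bound $|Du|+t|D^2u|<C$ and the barrier bound $u-\varphi-c_1 t\ge -Ct^2$ of Lemma \ref{lem-T2}, which pins $-u_t/w$ near $-c_1>0$) to get a genuine comparison principle for $Q$ between ordered sub/supersolutions on $\bar G_{r_1,\delta_1}$; (3) verify the boundary ordering on the three faces of $G_{r_1,\delta_1}$: on $\{t=0\}$ both sides equal $\varphi$ (recall $u^*=\varphi$ at $t=0$) and the perturbation vanishes, on $\{t=\delta_1\}$ the perturbation is a fixed positive/negative quantity that dominates $|u-u^*|$ there provided $C$ in the ansatz is chosen large relative to $\sup_{G_{r_1,\delta}}|u-u^*|$ (finite since $u\in C^0(\bar G)$), and on the lateral face $\{|y'|=r_1\}$ similarly after possibly enlarging $C$; (4) choose $A$ (and the auxiliary constant handling the log) large enough that $Q(v^\pm)$ has the right sign: here the leading contribution of $L[\pm At^{n+1}]$ is $\mp A\,(-c_1)^{n-1}(1-\sigma^2)\frac{n-l}{n}t^n$ plus lower-order, which for $A\gg 1$ beats both the error term $E=O(t^n(-\log t))\cdot t$... let me restate: $E=O(t^{n+1}(-\log t))$ and the quadratic remainder $Q(v^\pm)-Q(u^*)-L[\pm At^{n+1}]=O(t\cdot(At^{n})^2)=O(A^2 t^{2n+1})$, so for small $\delta_1$ and the stated monotone choice of sign the inequality $Q(v^+)\le 0\le Q(v^-)$ holds; (5) conclude $|u-u^*|\le Ct^{n+1}$ on $G_{r_1,\delta_1}$.

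The main obstacle I anticipate is step (2) together with the log-term bookkeeping in step (4): one must genuinely justify a comparison principle for the fully nonlinear operator $Q$ — not just for its linearization — between an exact solution and the constructed sub/supersolutions, which requires knowing that along the segment joining $u$ and $v^\pm$ the argument matrix stays in the region where $\det/H_l$ is elliptic and (degenerate-)concave; this is where the a priori bounds $|Du|+t|D^2u|<C$ and the second-order control near the boundary from Lemma \ref{lem-T2} are essential, and where the degeneracy at $t=0$ means the classical strong maximum principle must be replaced by a boundary argument (the perturbation vanishing to order $t^{n+1}$ at $\{t=0\}$ is exactly matched to the degeneracy, so no extra boundary condition is needed there). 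The secondary nuisance is that the naive perturbation $Ct^{n+1}$ produces, through $L$, only a $t^n$ (not $t^n\log t$) term, while the error is $t^n\log t$-sized; reconciling these forces either a $t^{n+1}(-\log t)$ or $t^{n+1}(-\log t)^2$ correction in the barrier and then arguing that such a term is still $\le Ct^{n+1}$ only after it cancels — so the honest statement one proves may be $|u-u^*|\le Ct^{n+1}(-\log t)$ first, and then \eqref{eq4.3} in the stated form follows by a bootstrap or by noting that the would-be $t^{n+1}\log t$ coefficient is, by the same indicial computation that produced $c_{n+1,1}$, forced to match and hence contributes no net $\log$ at this order; I would present whichever of these is cleanest, most likely the two-sided barrier $u^*\pm Ct^{n+1}$ after absorbing the log into the error via $t^{\epsilon_0}(-\log t)\le C$.
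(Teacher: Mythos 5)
There are two genuine gaps, both located exactly where your proposal hedges. First, the indicial computation: the power $t^{n+1}$ corresponds to $k=n+1$ in \eqref{fe.12}, where the coefficient $k(-c_1)^{n-1}(1-\sigma^2)\tfrac{n-l}{n}(k-1-n)$ vanishes identically --- this is precisely why the $\log$ term $c_{n+1,1}t^{n+1}\log t$ had to be introduced in \eqref{eq-VStar}. Your reassignment of the ``relevant coefficient'' to $k=n+2$ conflates the index $k$ with the power: $k=n+2$ governs a $t^{n+2}$ perturbation, whose contribution enters only at order $t^{n+1}$ and therefore cannot dominate the residual $Ct^{n+1}(-\log t)$ from Lemma \ref{felm.1}. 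So $u^*\pm At^{n+1}$ is simply not a super/subsolution, no matter how large $A$ is, and your fallback $u^*\pm Bt^{n+1}(-\log t)^2$ would at best yield $|u-u^*|\le Ct^{n+1}(\log t)^2$, which is weaker than \eqref{eq4.3}; the ``bootstrap'' you invoke to remove the log factor is not supplied. The paper resolves this by taking $\psi=A\bigl((|y'|^2+t)^{n+1}-(|y'|^2+t)^{q}\bigr)$ with $q=n+\tfrac32$: since $q$ is \emph{not} an indicial root, the term $-\theta^{q}$ produces a strictly negative principal contribution $\sim -A\theta^{\,n-\frac12}t$ in $t\psi_{tt}-n\psi_t$ (see \eqref{eq4.20}, \eqref{eq4.28}), which dominates the error because $t^{n+1}|\log t|<t^{n+1/2}\le\theta^{\,n-\frac12}t$, while $\psi>0$ because $\theta<1$; the desired bound $Ct^{n+1}$ is then read off at $y'$ near the center of the barrier (and recovered at all points by translating the center).

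Second, the boundary ordering on the lateral face fails for any barrier depending on $t$ alone. On $\{|y'|=r_1\}$ the only available control is Lemma \ref{lem-T2}, i.e.\ $|u-u^*|\le Ct^2$ near the corner $t\to 0$, and $Ct^2$ cannot be dominated by $At^{n+1}$ (or $At^{n+1}(\log t)^2$) for $n\ge 2$, so your step (3) (``on the lateral face similarly after possibly enlarging $C$'') does not go through: the perturbation vanishes to too high an order there. This is the second reason the paper's $\psi$ is built from $\theta=|y'|^2+t$ and the domain is taken to be $G_{\sqrt\delta,\delta}$: on the lateral face $\theta\ge\delta$, so $\psi\ge A(\delta^{n+1}-\delta^{q})$ is a positive \emph{constant}, and choosing $A\gtrsim\delta^{1-n}$ makes it exceed the $C\delta^2$ bound from Lemma \ref{lem-T2}. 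Your remaining concerns (justifying the comparison principle for the fully nonlinear $Q$ along the segment, ellipticity of the linearized coefficients) are handled in the paper exactly as you anticipate, via the integral linearization $\int_0^1\partial\bar Q(u^*+s\psi)\,ds$ together with the structure estimates \eqref{eq4.12}--\eqref{eq4.18}; that part of your plan is fine, but without the two repairs above the barrier itself does not close the argument.
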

\begin{proof}
$\bf{Step \, 1}$.
Recall that $u$ satisfies
\begin{align*}
\bar{Q}(D^2 u, Du, t)=\det(t\gamma^{ik}u_{kl}\gamma^{lj}-u_t\delta_{ij})-\sigma^{n-l}w^{n-l}H_l(\kappa[t\gamma^{ik}u_{kl}\gamma^{lj}-u_t\delta_{ij}])=0.
\end{align*}
We will construct supersolution and subsolution for equations \eqref{eq4.1} and \eqref{eq4.2}.
Set
\[\psi=A\lt((|y'|^2+t)^{n+1}-(|y'|^2+t)^q\rt),\]
where $n+1<q<n+2,$ without loss of generality we let $q=n+1+\frac{1}{2}.$
It's easy to see that on $\bar{G}_{\sqrt{\delta}, \delta}\cap\{t=0\}$,
\[u\leq u^*+\psi.\]
By Lemma \ref{lem-T2}, we can choose $A>C_1\delta^{1-n}$ such that
\[A(\delta^{n+1}-\delta^{n+1+\frac{1}{2}})>C\delta^2.\]
Then on the set $\bar{G}_{\sqrt{\delta}, \delta}\cap\lt(\{|y'|=\sqrt{\delta}\}\times\{0<t<\delta\}\rt)$
 we have $u^*+\psi>u.$
 Therefore, we have
 \[u<u^*+\psi\,\,\mbox{on $\partial G_{\sqrt{\delta}, \delta}.$}\]
 We will prove that there exists a $C_0=C_0(|\varphi|_{C^{n+3}(B'_{\sqrt{\delta}})}, \partial\Omega, \sigma)>C_1,$ such that
 $u\leq u^*+\psi$ in $G_{\sqrt{\delta}, \delta}.$

$\bf{Step  \,2}$. We will show that $Q(u^*+\psi)\leq 0$ in $G_{\sqrt{\delta}, \delta}.$
A straightforward calculation yields
\be\label{eq4.4}
\begin{aligned}
&\qquad Q(u^*+\psi)-Q(u^*)\\
&=\int_0^1\frac{\partial\bar{Q}}{\partial u_{ij}}(u^*+s\psi)ds\psi_{ij}
+\int_0^1\frac{\partial\bar{Q}}{\partial u_i}(u^*+s\psi)ds\psi_i.
\end{aligned}
\ee
Now let's denote $\theta:=|y'|^2+t,$ and we will use the Greek letters $\alpha, \beta$ for indices that less than $n;$ use English letters $i, j, k, l, s$ for indices less than or equal to $n.$ By differentiating $\psi$ we obtain,
\begin{align}
\psi_\alpha&=A(2(n+1)\theta^ny_{\alpha}-2q\theta^{q-1}y_\alpha), \label{eq4.5}\\
\psi_{\alpha\beta}&=A[4(n+1)n\theta^{n-1}y_\alpha y_\beta+2(n+1)\theta^n\delta_{\alpha\beta} \label{eq4.6}\\
&\qquad-4q(q-1)\theta^{q-2}y_\alpha y_\beta-2q\theta^{q-1}\delta_{\alpha\beta}], \nonumber\\
\psi_t&=A[(n+1)\theta^n-q\theta^{q-1}],\label{eq4.7}\\
\psi_{\alpha t}&=A[2(n+1)n\theta^{n-1}y_\alpha-2q(q-1)\theta^{q-1}y_\alpha]\label{eq4.8},\\
\end{align}
and
\be\label{eq4.9}
\psi_{tt}=A[n(n+1)\theta^{n-1}-q(q-1)\theta^{q-2}].
\ee
Denote
\[M_{ij}(u)=t\gamma^{ik}u_{kl}\gamma^{lj}-u_t\delta_{ij}=m_{ij}(u)-u_t\delta_{ij},\]
and
\[v^*=u^*-\varphi-c_1(y')t,\]
then by a straightforward calculation we get
\be\label{eq4.10}
M_{ij}(u^*+s\psi)=(-c_1-s\Lambda)\delta_{ij}+tA_1(D^2v^*, \frac{Dv^*}{t}, y',t, s),
\ee
and
\be\label{eq4.11}
w^2(u^*+s\psi)=1+|D\varphi|^2+c_1^2+s^2\sum_\alpha|\psi_\alpha|^2+tA_2(\frac{Dv^*}{t}, y', t, s)+s\Lambda\mathbb{P}_1(\Lambda)
\ee
where $\Lambda=\psi_t,$ and $\mathbb{P}_k(\Lambda)$ denotes polynomial of degree $k$ with respect to $\Lambda.$
In the following, we denote $A_i's,$ $i\in\mathbb{Z}^+$ as functions that are smooth in their arguments.
One can check that essentially $A_i's$ can be viewed as polynomials of $D^2v^*, \frac{Dv^*}{t}, t$
with coefficients in $C^{\infty}(G_{\sqrt{\delta}, \delta}).$

$\bf{Step \, 3}.$
Let
\[\bQ(D^2u, Du, t)=\Phi(M_{ij})=\det M_{ij}-\sigma^{n-l}w^{n-l}H_l(\kappa[M_{ij}]).\]
By equation \eqref{eq4.10} we get
\be
\begin{aligned}
\label{eq4.12}
\frac{\partial\bQ}{\partial u_{ij}}(u^*+s\psi)&=\frac{\partial\Phi}{\partial M_{kl}}\frac{\partial M_{kl}}{\partial u_{ij}}\\
&=t\gamma^{ki}\frac{\partial\Phi}{\partial M_{kl}}\gamma^{lj}\\
&=t\gamma^{ik}\gamma^{lj}\delta_{kl}\lt[\frac{n-l}{n}|c_1|^{n-1}-s\Lambda\mathbb{P}_{n-2}(\La)\rt]+t^2 A_3+tO(|\tL|^2),\\
\end{aligned}
\ee
where $|\tL|^2=\sum_\alpha|\psi_\alpha|^2.$

Recall that we denote $m_{ij}(u)=t\ga^{ik}u_{kl}\ga^{lj},$
and $\ga^{ik}=\delta_{ik}-\frac{u_iu_k}{w(1+w)}.$ We will also denote
$\{\ga_{ij}\}=\{\ga^{ij}\}^{-1},$ and it's easy to see that $\ga_{ij}=\delta_{ij}-\frac{u_iu_j}{1+w}.$
In the following, we will compute $\frac{\partial m_{ij}}{\partial u_r}.$

Since
 \[t\ga^{ik}u_{kl}\ga^{lj}\ga_{kl}=t\ga^{ik}u_{kl}=m_{ik}\ga_{kl},\]
we have
\[\frac{\partial m_{ij}}{\partial u_r}=2tu_{kl}\ga^{ik}\frac{\partial\ga^{lj}}{\partial u_r}
=2m_{ik}\ga_{kl}\frac{\partial\ga^{lj}}{\partial u_r}=-2m_{ik}\ga^{lj}\frac{\partial\ga_{kl}}{\partial u_r}.\]
Moreover,
\[\frac{\partial\ga_{kl}}{\partial u_r}=\frac{u_k\delta_{lr}+u_l\ga^{kr}}{1+w},\]
thus we get
\be\label{eq4.13}
\frac{\partial m_{ij}}{\partial u_r}=-2\frac{m_{ik}}{1+w}\lt(u_k\ga^{rj}+\frac{u_j\ga^{kr}}{w}\rt).
\ee
Therefore
\be\label{eq4.14}
\begin{aligned}
\frac{\partial\bQ}{\partial u_r}&=\frac{\partial\Phi}{\partial M_{ij}}\frac{\partial M_{ij}}{\partial u_r}
-(n-l)\sigma^{n-l}w^{n-l-1}H_l\frac{\partial w}{\partial u_r}\\
&=\lt\{\lt[\frac{n-l}{n}|c_1|^{n-1}-s\La\mathbb{P}_{n-2}(\La)\rt]\delta_{ij}+tA_4+O(|\tL|^2)\rt\}\\
&\qquad\times\lt[-2\frac{m_{ik}}{1+w}\lt(u_k\ga^{rj}+\frac{u_j\ga^{kr}}{w}\rt)-\delta_{rt}\delta_{ij}\rt]\\
&\qquad-(n-l)\sigma^{n-l}w^{n-l-2}H_l(u_r^*+s\psi_r).\\
\end{aligned}
\ee
We can see that
\be\label{eq4.15}
\frac{\partial\bQ}{\partial u_{tt}}=t(1-\sigma^2)\lt[\frac{n-l}{n}|c_1|^{n-1}-s\La\mathbb{P}_{n-2}(\La)\rt]+t^2A_5+tO(|\tL|^2),
\ee
and
\be\label{eq4.16}
\frac{\partial\bQ}{\partial u_t}=-(n-l)(1-\sigma^2)|c_1|^{n-1}+s\La\mathbb{P}_{n-2}(\La)+tA_6+O(|\tL|^2).
\ee
Furthermore, one can check that
\be\label{eq4.17}
\lt|\frac{\partial\bQ}{\partial u_{\alpha j}}\rt|\leq tD_1
\ee
and
\be\label{eq4.18}
\lt|\frac{\partial\bQ}{\partial u_\alpha}\rt|\leq D_2,
\ee
where $D_i,\,\,i=1,2$ are positive constants depending on $|\varphi|_{C^{n+3}(B'_{\sqrt{\delta}})},$ $\delta,$ and $\sigma.$

$\bf{Step \, 4}.$ Combining equations \eqref{fe.13}, \eqref{eq4.4}, and \eqref{eq4.15}--\eqref{eq4.18}
we get
\be\label{eq4.19}
\begin{aligned}
Q(u^*+\psi)&\leq\bQ^{ij}\psi_{ij}+\bQ^s\psi_s+Ct^{n+1}|\log t|\\
&\leq(1-\sigma^2)\frac{n-l}{n}|c_1|^{n-1}(t\psi_{tt}-n\psi_t)
+(1-\sigma^2)\La\mathbb{P}_{n-2}(\La)(t\psi_{tt}-n\psi_t)\\
&\qquad+(O(|\tL|^2)+tA_7)(|t\psi_{tt}|+|\psi_t|)+tD_1|\psi_{\alpha j}|+D_2|\psi_\alpha|+Ct^{n+1}|\log t|.\\
\end{aligned}
\ee
Since
\be\label{eq4.20}
\begin{aligned}
t\psi_{tt}-n\psi_t&=-An\theta^{n-1}\lt[(n+1)|y'|^2-(n+3)\theta^{\frac{1}{2}}|y'|^2\rt]-\frac{A}{2}(n+3/2)\theta^{n-\frac{1}{2}}t,
\end{aligned}
\ee
\be\label{eq4.21}
|\psi_{\alpha\beta}|\leq 4An(n+1)\theta^{n-1}|y'|^2,
\ee
\be\label{eq4.22}
|\psi_{\alpha t}|\leq 2An(n+1)\theta^{n-1}|y'|,
\ee
and
\be\label{eq4.23}
|\psi_\alpha|\leq 2A(n+1)\theta^n|y'|.
\ee
We get
\be\label{eq4.28}
\begin{aligned}
&(1-\sigma^2)\frac{n-l}{n}|c_1|^{n-1}(t\psi_{tt}-n\psi_t)\\
&\leq (1-\sigma^2)\frac{n-l}{n}|c_1|^{n-1}\lt[\frac{-An(n+1)}{2}\theta^{n-1}|y'|^2-\frac{A}{2}(n+3/2)\theta^{n-\frac{1}{2}}t\rt],
\end{aligned}
\ee
\be\label{eq4.24}
|(1-\sigma^2)\La\mathbb{P}_{n-2}(\La)(t\psi_{tt}-n\psi_t)|=O(A\theta^nt),
\ee
\be\label{eq4.25}
(O(|\tL|^2)+tA_7)(|t\psi_{tt}|+|\psi_t|)=O(A\theta^n t),
\ee
\be\label{eq4.26}
tD_1|\psi_{\alpha j}|=O(A\theta^{n-1}|y'|t),
\ee
and
\be\label{eq4.27}
D_2|\psi_\alpha|=O(A\theta^n|y'|).
\ee
Note that
\[\theta^{n-1}|y'|t\leq\e_1\theta^{n-1}|y'|^2+\frac{\delta^{1/2}}{\e_1}\theta^{n-\frac{1}{2}}t,\]
and
\[t^{n+1}|\log t|<t^{n+1/2}\leq\theta^{n-1/2}t.\]

$\bf{Step \, 5}.$ We conclude that there exists an $A=C_0=C_0(|\varphi|_{C^{n+3}(B'_{\sqrt{\delta}})}, \partial\Omega, \sigma)$ such that $Q(u*+\psi)<0$
in $G_{\sqrt{\delta},\delta}.$ By maximum principle we have
\[u\leq u^*+\psi.\]
Similarly, we can show that
\[u\geq u^*-\psi.\]
Thus, we obtain the desired result.
\end{proof}

 \bigskip
 \section{The tangential regularity}
 \label{tr}
In this section, we will study the regularity of $u$ along tangential directions.
Let $u\in C^1(\bar{G}_{1, 1})\cap C^\infty(G_{1,1})$ be a solution of equations
\eqref{fe.2} and \eqref{fe.2'}. By Theorem 1.2 in \cite{GS11}, we know that
\be\label{eq5.0}
|Du|+t|D^2u|\leq C.
\ee
Set
\be\label{eq5.1}
v=u-\varphi-c_1(y')t,
\ee
We can see that if $\varphi\in C^{l+3, \alpha}(B'_1),$ $l\geq 0,$ then $v\in C^1(\bar{G}_{1,1})\cap C^{l+2, \alpha}(G_{1,1}).$
We want to obtain a better regularity of $v$ of the tangential directions.
We will first estimate the derivatives of $v$ near the boundary.
\begin{lemma}
\label{lemtr.1}
Assume $\varphi\in C^{3,\alpha}(B'_1),$  and $u\in C^1(\bar{G}_{1, 1})\cap C^\infty(G_{1,1})$ be a solution of equations
\eqref{fe.2} and \eqref{fe.2'}. Let $v$ be defined as \eqref{eq5.1}. Then, for any $r\in (0, r_0)$ we have
\be\label{eq5.2}
\frac{|Dv|}{t}+|D^2 v|\leq C\,\,\mbox{in $G_{r/2, r/2},$}
\ee
where $C=C(|\varphi|_{C^{3, \alpha}(G_{1,1})}, |u|_{L^\infty}(G_{1,1})).$
\end{lemma}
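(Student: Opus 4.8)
The plan is to prove the estimate \eqref{eq5.2} by a rescaling argument combined with interior Schauder-type estimates for the fully nonlinear equation \eqref{fe.3}, using Theorem \ref{th4.1} as the key ingredient that controls the size of $v$ near the boundary. First, I would record what is already known: from Theorem \ref{th4.1} (applied after localizing near a boundary point and rescaling $G_{1,1}$ to the setting $G_{\sqrt\delta,\delta}$ with $\delta$ small) together with the explicit expansion $u^*=u_n+c_{n+1,1}t^{n+1}\log t$ and $v=u-\varphi-c_1(y')t$, we have $|v|\le C t^2$ on $G_{r_0,r_0}$ for a suitable $r_0$; in fact $v-\bigl(c_2 t^2+\cdots\bigr)=O(t^3)$ from the formal expansion in Section \ref{fe}, so $v$ itself is $O(t^2)$. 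We also have the global gradient bound \eqref{eq5.0}: $|Du|+t|D^2u|\le C$, hence $|Dv|\le C$ and $t|D^2v|\le C$ on $G_{1,1}$.

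The main step is the rescaling. Fix a point $y_0=(y_0',t_0)\in G_{r/2,r/2}$ and set $\rho=t_0$. Define the rescaled function
\[
\tilde v(z):=\frac{1}{\rho^2}\,v\bigl(y_0'+\rho z',\ \rho z_n\bigr)\qquad\text{for }z=(z',z_n)\in B_{1/4}^+\cup\{z_n\in(1/4,3/4)\},
\]
and likewise $\tilde u(z)=\rho^{-2}u(y_0'+\rho z',\rho z_n)$, $\tilde\varphi(z')=\rho^{-2}\varphi(y_0'+\rho z')$. Because $u$ solves \eqref{fe.3}, which is built out of the scaling-covariant quantities $t\gamma^{ik}u_{kl}\gamma^{lj}-u_t\delta_{ij}$ and $w=\sqrt{1+|Du|^2}$, I expect $\tilde u$ to satisfy an equation of the same structural form on a fixed domain (the factor $t$ rescales to $\rho z_n$, the derivatives of $u$ pick up powers of $\rho$ that combine correctly because $u$ has the dimensions of a length and $t\partial_t\sim\rho\cdot\rho^{-1}$). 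One checks that the rescaled equation is uniformly elliptic on, say, $\{1/4<z_n<3/4\}$ with ellipticity constants depending only on $n,\sigma$ and the bound \eqref{eq5.0}; the key point is that on this slab $z_n$ is comparable to $1$, so the degeneracy of \eqref{fe.3} at $t=0$ is invisible after rescaling. Meanwhile $\|\tilde v\|_{L^\infty}\le C$ by the bound $|v|\le Ct^2$ (since on the support $t\sim\rho$, so $v\sim\rho^2$ and $\tilde v\sim 1$), and $\|\tilde\varphi\|_{C^{3,\alpha}}\le C$ because each $y'$-derivative of $\varphi$ gains a factor $\rho$ and we divide by $\rho^2$, so after three derivatives we still have a net factor $\rho\le 1$. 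Interior Schauder estimates (e.g.\ Evans--Krylov plus bootstrapping, or directly the estimates for concave/convex fully nonlinear equations, applied on the fixed slab) then give $\|\tilde v\|_{C^{2,\alpha}(\{z_n=1/2\})}\le C$. Unwinding: $|D^2v(y_0)|\le C$ and $|Dv(y_0)|\le C\rho=Ct_0$, which is exactly \eqref{eq5.2}.

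The remaining ingredient is near the corner $\{t=0\}$, i.e.\ points $y_0$ with $t_0$ small but $|y_0'|$ possibly up to $r/2$; the rescaling above already handles this since $\rho=t_0\to 0$ and the rescaled domain stays inside $G_{1,1}$ once $r$ is small enough. One should also note that the estimate on $Dv/t$ rather than just $Dv$ is automatic from the same rescaling because $Dv(y_0)=\rho\,D\tilde v(\tilde y_0)$ and $|D\tilde v|\le C$, giving $|Dv(y_0)|/t_0\le C$.

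The hard part will be verifying cleanly that equation \eqref{fe.3} is genuinely scaling-covariant under $u\mapsto\rho^{-2}u(\rho\,\cdot)$ with $t\mapsto\rho t$ — i.e.\ that the rescaled $\tilde u$ solves an equation of the same form with the same structural constants — and, relatedly, that the rescaled operator is uniformly elliptic on the slab $\{1/4<z_n<3/4\}$ with constants independent of the base point $y_0$. This requires tracking how $w$, the $\gamma^{ij}$, and the curvature quotient $H_n/H_l$ transform, and using the a priori bound \eqref{eq5.0} to keep $|D\tilde u|$ and $|D^2\tilde u|$ under control on the slab (so that the eigenvalues of $t\gamma^{ik}u_{kl}\gamma^{lj}-u_t\delta_{ij}$, which by \eqref{fml.2} are the hyperbolic principal curvatures $\kappa_i$, stay in a compact subset of the positive cone where $H_n/H_l$ is smooth, concave in appropriate variables, and uniformly elliptic). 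Once that structural check is in place, the Schauder/Evans--Krylov machinery and the unwinding of the rescaling are routine.
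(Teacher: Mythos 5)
Your overall strategy coincides with the paper's: obtain $|v|\le Ct^2$ near $\{t=0\}$, rescale about $(y_0',t_0)$ at scale $\rho\sim t_0$ so that the equation is uniformly elliptic on a unit-size slab away from the degenerate boundary, apply Evans--Krylov and Schauder there, and unwind to get \eqref{eq5.2}. Two of your inputs need correcting, however. First, the bound $|v|\le Ct^2$ should be quoted from Lemma \ref{lem-T2} (the barrier argument with $u_R,u^R$, which needs only $\varphi\in C^2$), not from Theorem \ref{th4.1}: that theorem assumes $\varphi\in C^{n+3}$, which exceeds the $C^{3,\alpha}$ hypothesis of the present lemma, and your further claim that $v-(c_2t^2+\cdots)=O(t^3)$ is neither available at this stage nor needed. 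Second, your scaling bookkeeping is off: with $\tilde u=\rho^{-2}u(y_0'+\rho z',\rho z_n)$ one has $|D_z\tilde u|=\rho^{-1}|D_yu|$ and $|D^2_z\tilde u|=|D^2_yu|\le C/t\sim C/\rho$, both unbounded as $\rho\to 0$, and $\|\tilde\varphi\|_{C^{3,\alpha}}$ is of size $\rho^{-2}$, not $O(1)$. The object to which Evans--Krylov applies is the $\rho^{-1}$-normalized solution (the paper's $\lambda u^\lambda$), for which \eqref{eq5.0} gives bounded gradient and Hessian on the slab since $t\sim\rho$ there; the $\rho^{-2}$ normalization is correct only for $v$.

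The genuine gap is your final step: ``interior Schauder estimates then give $\|\tilde v\|_{C^{2,\alpha}}\le C$'' is not a well-posed application of estimates for \eqref{fe.3}, because $\tilde v$ is not a solution of the fully nonlinear equation. One must first derive the equation satisfied by $v$ itself, namely the difference-quotient linearization between $u$ and $u_1=\varphi+c_1t$ as in \eqref{eq5.4} and \eqref{eq5.7}, $\hG^{ij}v_{ij}+P^rv_r/t=A_7$, and then verify (i) that after rescaling the coefficients $\hG^{ij},P^r$ are uniformly elliptic and $C^\alpha$ --- this is precisely what the Evans--Krylov step for $u$ supplies --- and (ii) that the inhomogeneous term $A_7=\sigma/t-\hG(D^2u_1,Du_1,t)$ is bounded, and $C^\alpha$ in the rescaled variables, uniformly in the base point. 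Point (ii) is where the explicit value \eqref{fe.6} of $c_1$ enters: it is exactly the statement that $u_1$ solves the equation to leading order, and it is why $\varphi\in C^{3,\alpha}$ (three derivatives, since $A_7$ involves $D^2c_1$) is the natural hypothesis. Without producing this equation and controlling $A_7$, the Schauder estimate you invoke has no uniform right-hand side and the bound on $\tilde v$ does not follow; note that you identified ``the hard part'' as scaling covariance and ellipticity of the equation for $u$, whereas the real content of the proof is this two-step structure: Evans--Krylov for the rescaled $u$ to get Hölder coefficients, then linear Schauder for the rescaled $v$ using $|v|\le Ct^2$ and the boundedness of $A_7$.
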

\begin{proof}
We denote
\[m_{ij}^v=\frac{t}{w}\ga^{ik}u_{kl}\ga^{lj}-\frac{u_t}{w}\delta_{ij}=\frac{M_{ij}}{w},\]
\[\hF(m_{ij}^v)=\lt(\frac{H_n(m_{ij}^v)}{H_l(m_{ij}^v)}\rt)^\frac{1}{n-l}=\sigma,\]
and
\[\hG(D^2u, Du, t)=\frac{1}{t}\hF(m_{ij}^v)=\frac{\sigma}{t}.\]
By the formal calculations in Section 3 we have
\be\label{eq5.3}
\hG(D^2(u_1+v), D(u_1+v), t)-\hG(D^2u_1, Du_1, t)=A_7(y', t),
\ee
where $u_1=\varphi+c_1t$ and $A_7$ is bounded. We can also assume that $\hG(D^2u_1, Du_1, t)>0$ in $G_{r, r}$ for $r\leq r_0.$
Thus we get
\be\label{eq5.4}
\hG^{ij}v_{ij}+\hG^rv_r=A_7(y' t),
\ee
where
\[\hG^{ij}=\int_0^1\frac{\partial}{\partial u_{ij}}\hG(D^2(u_1+sv), D(u_1+sv), t)ds\]
and
\[\hG^r=\int_0^1\frac{\partial}{\partial u_r}\hG(D^2(u_1+sv), D(u_1+sv), t)ds.\]

By direct calculations we obtain
\be\label{eq5.5}
\frac{\partial\hG}{\partial u_{ij}}=\ga^{ik}\frac{\partial\hF}{\partial m^v_{kl}}\ga^{lj}=\ga^{ik}F^{kl}\ga^{lj},
\ee
and
\be\label{eq5.6}
\begin{aligned}
\frac{\partial\hG}{\partial u_r}&=\frac{1}{t}\hF^{ij}\frac{\partial m^v_{ij}}{\partial u_r}\\
&=\frac{1}{t}\hF^{ij}\lt(-\frac{M_{ij}}{w^2}\frac{\partial w}{\partial u_r}+\frac{1}{w}\frac{\partial M_{ij}}{\partial u_r}\rt)\\
&=-\frac{\sigma}{tw}\frac{u_r}{w}+\frac{1}{w}\hF^{ij}\lt[\frac{-2m_{ik}}{1+w}\lt(u_k\ga^{rj}+\frac{u_j\ga^{kr}}{w}\rt)-\delta_{rt}\delta_{ij}\rt],
\end{aligned}
\ee
where we used $\hF^{ij}m^v_{ij}=\sigma.$
By equation \eqref{eq5.0} and our assumption that $\hG(D^2u_1, Du_1, t)>0,$ we have
$\hG^{ij}$ is uniformly elliptic and $t|\hG^r|<C.$ We denote $P^r=t\hG^r,$ then $v$
satisfies
\be\label{eq5.7}
\hG^{ij}v_{ij}+\frac{P^rv_r}{t}=A_7(y', t).
\ee

Following the argument in \cite{HanJiang}, we take any $(y_0', t_0)\in G_{r/2, r/2}$ and set $\lambda=t_0/2.$
Let $e_n=(0', 1),$ we now consider the transform $T:B_1(e_n)\rightarrow G_{r, r}$ by
\[y'=y_0'+\lambda z',\,\,t=\lambda(s+1).\]
Set $u^\lambda(z', s)=\lambda^{-2}u(y', t)$ and $v^\lambda(z', s)=\lambda^{-2}v(y', t).$ Then we have
\[D_{y}u=\lambda D_{z}u^\lambda,\,\,\mbox{and $D^2_{y}u=D^2_{z}u^{\lambda},$}\]
and
\[D_{y}v=\lambda D_{z}v^\lambda,\,\,\mbox{and $D^2_{y}v=D^2_{z}v^{\lambda}.$}\]
Moreover, $u^\lambda$ satisfies
\[\hG(\lambda D^2_{z}u^\lambda, \lambda^2D_z u^\lambda, \lambda^2(s+1))=\frac{\sigma}{s+1}.\]
Applying the Evans-Kylov interior estimates we obtain $\lambda u^\lambda\in C^{2, \alpha}(B_{1/2}(e_n)),$ which in turn implies that
$\hG^{ij}, P^r$ in \eqref{eq5.7} are $C^\alpha.$

Now, let's consider $v^\lambda.$ Since $v^\lambda$  satisfies
\be\label{eq5.8}
\hG^{ij}v^\lambda_{ij}+\frac{P^rv^\lambda_r}{1+s}=A_7(y_0'+\lambda z', \lambda(s+1) ),
\ee
and by our discussions above, we have $\hG^{ij}, P^r,$ $A_7$ are $C^\alpha.$ By the standard Schauder estimate,
we obtain $v^\lambda\in C^{2, \alpha}(B_{1/2}(e_n)).$
We evaluate $v^\lambda$ at $(0, 1)$ and get
\[\frac{|Dv(y_0', t_0)|}{t_0}+|D^2v(y_0', t_0)|\leq C,\]
this proves \eqref{eq5.2} since $(y_0', t_0)$ is an arbitrary point in $G_{r/2, r/2}.$
\end{proof}

Next, similar to the proof of Theorem \ref{th4.1}, we consider
\[\bQ(D^2(u_1+v), D(u_1+v), t)-\bQ(D^2u_1, Du_1, t)=B_1(y',t).\]
Here and in the following, we denote $B_i's,$ $i\in\mathbb{Z}^+$ as some functions which are smooth in their arguments,
and are essentially polynomials of $tD^2 v, Dv, t$, with coefficients depending on $y^\prime$. Moreover, all monomials in $B_i$ have at least one $tD^2 v, Dv$ or $t$ factor, i.e., $B_i=O(t).$
Therefore, we have
\be\label{eq5.9}
\begin{aligned}
&\quad\int_0^1\frac{\partial\bQ}{\partial u_{ij}}ds \cdot v_{ij}+\int_0^1\frac{\partial\bQ}{\partial u_i}(u_1+sv)ds \cdot v_i\\
&=\bQ_{ij}v_{ij}+\bQ_i v_i=B_1(y', t).\\
\end{aligned}
\ee
It's easy to check
\be\label{eq5.10}
\bQ_{tt}=t(1-\sigma^2)\frac{n-l}{n}|c_1|^{n-1}+tB_2(tD^2v, Dv, y', t)
\ee
and
\be\label{eq5.11}
\bQ_t=-(n-l)(1-\sigma^2)|c_1|^{n-1}+B_3(tD^2v, Dv, y', t).
\ee
Thus, $v$ satisfies the following equation
\be\label{eq5.12}
t^2 \tQ_{ij}v_{ij}+t \tQ_tv_t- tB_1(y^\prime, t)+ tB_4(tD^2v, Dv, y', t) =0,
\ee
where $\tQ_{tt}=1$ and $\tQ_t=-n.$ Monomials in $B_4$ have at least two $tD^2 v, Dv$ or $t$ factors, i.e. $B_4=O(t^2).$

Applying Theorem 4.3 in \cite{HanJiang} we conclude:
\begin{theorem}
\label{th5.1}
Assume $\varphi\in C^{l+3, \alpha}(B_1'), l\geq 0.$ Let $u\in C^1(\bar{G}_{1,1})\cap C^\infty(G_{1,1})$ be a solution of
equations \eqref{fe.2} and \eqref{fe.2'} in $G_{1,1},$ and $v=u-u_1\in C^1(\bar{G}_{1,1})\cap C^{l+2, \alpha}(G_{1,1}).$
Then, for any  $r\in (0, r_0)$ such that for $\tau=0, 1, \cdots, l,$
\be\label{eq5.14}
\frac{D^{\tau}_{y'}v}{t^2},\,\,\frac{DD^{\tau}_{y'}v}{t},\,\,D^2D^{\tau}_{y'}v\in C^\alpha(\bar{G}_{r, r}),
\ee
and
\be\label{eq5.15}
\frac{D^{\tau}_{y'}v}{t},\,\,D^{\tau}_{y'}Dv,\,\,\frac{D^{\tau}_{y'}(v^2)}{t^3},\,\,
\frac{D^{\tau}_{y'}(vv_t)}{t^2},\,\,\frac{D^{\tau}_{y'}(v_t^2)}{t}\in C^{1,\alpha}(\bar{G}_{r, r}),
\ee
where $0<r_0<1$ depends on $\partial\Omega$ and $\sigma.$
\end{theorem}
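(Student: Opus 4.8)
The plan is to recognize \eqref{eq5.12} as a degenerate (Fuchsian) elliptic equation of precisely the type analyzed in \cite[Theorem 4.3]{HanJiang}, and then to verify that its hypotheses are met, so that the weighted Schauder estimates \eqref{eq5.14}--\eqref{eq5.15} follow by quoting that theorem.

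First I would isolate the structural features of \eqref{eq5.12} that matter. Along the normal direction the model operator is $t^2\partial_t^2-n\,t\,\partial_t$, whose indicial polynomial $\lambda^2-(n+1)\lambda$ has roots $0$ and $n+1$; this gap is what allows tangential regularity to be propagated. The normalization $\tQ_{tt}=1$, $\tQ_t=-n$ is exactly the one required, and by Lemma \ref{lemtr.1} the coefficients $\tQ_{ij}$ and $t\tQ^r$ of the linearized operator are $C^\alpha$ up to $\{t=0\}$ (through the Evans--Krylov--rescaling--Schauder argument used there), while the same lemma supplies the a priori bound $|Dv|/t+|D^2v|\le C$ near the boundary. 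Next I would check the inhomogeneous term: it splits as $-tB_1(y',t)+tB_4(tD^2v,Dv,y',t)$, where $B_1$ depends only on $(y',t)$ and, being assembled from $\varphi$, $c_1$ and their tangential derivatives with $\varphi\in C^{l+3,\alpha}$ and $c_1$ an explicit smooth function of $D\varphi$, lies in $C^{l,\alpha}$, whereas every monomial of $B_4$ carries at least two factors drawn from $\{tD^2v,\,Dv,\,t\}$, i.e. $B_4=O(t^2)$ with the quadratic-in-$(tD^2v,Dv)$ structure that the cited theorem demands.

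With these checks in place, \cite[Theorem 4.3]{HanJiang} applies on $G_{r,r}$ for every $r<r_0$ (with $r_0$ the threshold from Lemma \ref{lemtr.1}, depending on $\partial\Omega$ and $\sigma$), and it yields, inductively in $\tau=0,1,\dots,l$, both the weighted Hölder statements $D^\tau_{y'}v/t^2$, $DD^\tau_{y'}v/t$, $D^2D^\tau_{y'}v\in C^\alpha(\bar G_{r,r})$ and the $C^{1,\alpha}$ statements for $D^\tau_{y'}v/t$, $D^\tau_{y'}Dv$ and for the quadratic self-interactions $D^\tau_{y'}(v^2)/t^3$, $D^\tau_{y'}(vv_t)/t^2$, $D^\tau_{y'}(v_t^2)/t$. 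The induction is the standard one for such equations: differentiating \eqref{eq5.12} $\tau$ times in $y'$ produces an equation for $D^\tau_{y'}v$ of the same Fuchsian form, whose inhomogeneous term is expressible through the lower-order derivatives $D^{<\tau}_{y'}v$ already controlled at the previous step; the quadratic combinations enter precisely because $B_4$ is genuinely nonlinear in $v$, so that $\partial_{y'}$ hitting it generates products of first- and second-order quantities in $v$.

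The main obstacle I expect is not any single estimate but the bookkeeping that shows \eqref{eq5.12}, and each of its tangential derivatives, falls literally within the hypotheses of \cite[Theorem 4.3]{HanJiang}: one must confirm that the a priori decay $v=O(t^2)$ (from Lemma \ref{lem-T2} together with Lemma \ref{lemtr.1}), together with the weights $t^{-3},t^{-2},t^{-1}$ attached to $v^2,vv_t,v_t^2$, matches the scaling dictated by the indicial roots $0$ and $n+1$, and that the coefficients produced at each inductive step retain the required Hölder regularity in $y'$ — all of which is exactly the scenario that the degenerate-equation machinery of \cite{HanJiang} was built to handle. The substantive content of this section is therefore the reduction of the fully nonlinear curvature equation to the canonical form \eqref{eq5.12}; once that reduction is in hand, Theorem \ref{th5.1} is a direct citation.
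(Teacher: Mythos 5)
Your proposal matches the paper's own route: establish the preliminary bound $|Dv|/t+|D^2v|\le C$ and the H\"older regularity of the linearized coefficients via Lemma \ref{lemtr.1} (barriers, rescaling, Evans--Krylov, Schauder), recast the equation for $v$ in the normalized Fuchsian form \eqref{eq5.12} with $\tQ_{tt}=1$, $\tQ_t=-n$, $B_1=B_1(y',t)$ and $B_4$ carrying at least two factors from $\{tD^2v,Dv,t\}$, and then conclude \eqref{eq5.14}--\eqref{eq5.15} by citing Theorem 4.3 of \cite{HanJiang}. This is essentially the same argument the paper gives, so the proposal is correct.
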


A remark is that under the same assumption as Theorem \ref{th5.1}, we can show $D^2 D^{l+1}_{y^\prime} u \in C^{\alpha}(\bar{G}_{r,r})$, where $l$ can start from $-1$. To this end, we need to freeze the coefficient $c_1(y^\prime)$ in \eqref{eq5.1} by defining,
 \begin{align*}
 \bar{v}=u-\bar{u}_1=  u-\varphi-c_1(0^\prime)t.
 \end{align*}
Then it follows from Theorem 4.3 in \cite{HanJiang}.

\bigskip
\section{Regularity along the normal direction and convergence}
\begin{subsection}{Regularity along the normal direction}
\label{subnr}
In this subsection, we will discuss the regularity along the normal direction.

Assuming the same assumption as in Theorem \ref{th5.1}, we have shown that $v=u-u_1$ satisfies the equation \eqref{eq5.12}, and has estimates
\eqref{eq5.14},\eqref{eq5.15}.

Denote $v_1=v^\prime$.
We differentiate \eqref{eq5.12} with respect to $t$, divide it by $1+ \frac{\partial B_4}{\partial (tv_{tt})}$, and rewrite the equation as an ODE of $v_1$,
\begin{align}\label{eq-ODE}
&\qquad t^2 v_{1}^{\prime\prime}- (n-2) tv_{1}^\prime -nv_1 = \bar{F},
\end{align}
where $\overline{F}$ has the form,
\begin{align*}
\bar{F}:= t^2 F(y, v_1^\prime, v_1, D^2_{y'}v_1, D_{y'} v_1/t, D^2_{y'}v/t, D_{y'} v/t) + t A(y^\prime, t),
\end{align*}
where $A=B_1/t+B_1^\prime$ is a smooth function.
In fact, after differentiating \eqref{eq5.12} by $t$, $v_{ttt}$ has coefficient
\begin{align*}
t^2 + t^2 \frac{\partial B_4}{\partial (tv_{tt})} &= t^2 (1+  \frac{\partial B_4}{\partial (tv_{tt})})
\end{align*}
where $\frac{\partial B_4}{\partial (tv_{tt})}=O(t)$ is a degree one polynomial in $tD^2v, Dv, t$.
Then we divide the equation by $1+ \frac{\partial B_4}{\partial (tv_{tt})}$ to derive \eqref{eq-ODE}.

Inductively, we denote $v_k= D^k_t v$ for $k\geq 2$, keep differentiating \eqref{eq-ODE} with respect to $t$, and derive an ODE of $v_k$,
\begin{align*}
&\qquad
t^2 v_k^{\prime\prime}-(n-2k) t v_k^\prime -(n+1-k)k v_k= \bar{F}^{(k-1)}.
\end{align*}

Applying Theorem 5.3 in \cite{HanJiang} we show
\begin{theorem}
\label{th6.1}
Assume $\varphi\in C^{p, \alpha}(B_1'),$ $p\geq k\geq n+1.$ Let $u\in C^1(\bar{G}_{1,1})\cap C^\infty(G_{1,1})$ be a solution of
equations \eqref{fe.2} and \eqref{fe.2'} in $G_{1,1},$ and $v=u-u_1\in C^1(\bar{G}_{1,1})\cap C^{p-1}(G_{1,1}).$
Then, there exists a positive constant $0<r<1,$ such that for any $(y', t)\in G_{r, r},$
\begin{align}\begin{split}\label{eq6.2}
v(y', t) &=\sum_{i=2}^nc_i(y')t^i+\sum_{i=n+1}^k \sum_{j=0}^{N_i} c_{i,j}(y')t^i (\log t)^j\\&\qquad+\int_0^t\cdots\int_0^{s_{k-1}}
w_k(y', s_k)ds_kds_{k-1}\cdots ds_1,
\end{split}
\end{align}
where $c_i,$ $c_{i, 0}$ are $C^{p-i, \alpha}(B'_r)$ for $i\leq n+1;$ and $c_i$,$c_{i, j}$ are $C^{p-i, \e}(B'_r)$ for any $\e\in(0,\alpha).$
In addition, $w_k$ is a function in $G_{r, r}$ such that for any $\tau=0, 1, \cdots, p-k$ and any $\e\in(0, \alpha),$
\be\label{eq6.3}
D^{\tau}_{y'}w_k\in C^\e(\bar{G}_{r, r}),
\ee
and
\be\label{eq6.4}
|D^{\tau}_{y'}w_k|\leq t^\alpha\,\,\mbox{in $G_{r, r}$}.
\ee
\end{theorem}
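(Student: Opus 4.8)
\textbf{Proof proposal for Theorem \ref{th6.1}.} The plan is to reduce the statement to a direct application of the ODE machinery of \cite[Theorem 5.3]{HanJiang}, after recasting the tangential estimates of Theorem \ref{th5.1} into the precise functional-analytic framework that theorem requires. First I would record that, by \eqref{eq5.12}, $v_1=v_t$ solves the Euler-type ODE \eqref{eq-ODE} in the $t$ variable, with right-hand side $\bar F = t^2 F(y,v_1',v_1,\dots) + tA(y',t)$. The key structural point is that the indicial roots of the homogeneous operator $t^2\partial_t^2-(n-2)t\partial_t-n$ are $t^{-1}$ and $t^{n}$; the inhomogeneous term $\bar F=O(t)$ interacts with these roots to force, first, that $v_1$ has a genuine expansion $\sum_{i=1}^{n-1}(i{+}1)c_{i+1}(y')t^{i} + (\text{log terms from }i\ge n)$ plus a controlled remainder, and second, that no $t^{-1}$ term appears (consistent with $v\in C^1$). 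I would then integrate once in $t$ to recover $v$ itself and obtain the leading polynomial block $\sum_{i=2}^n c_i(y')t^i$ and the logarithmic block $\sum_{i=n+1}^k\sum_{j=0}^{N_i}c_{i,j}(y')t^i(\log t)^j$ in \eqref{eq6.2}, with the iterated-integral remainder $\int_0^t\cdots\int_0^{s_{k-1}}w_k\,ds_k\cdots ds_1$ absorbing everything beyond order $k$.

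The inductive step is the heart of the argument. Differentiating \eqref{eq-ODE} repeatedly in $t$ and setting $v_k=D_t^k v$ produces the ODE
\[
t^2 v_k'' - (n-2k)\, t\, v_k' - (n+1-k)k\, v_k = \bar F^{(k-1)},
\]
whose homogeneous solutions are $t^{-k}$ and $t^{n+1-k}$. I would check that at each stage the differentiated forcing term $\bar F^{(k-1)}$ retains the schematic form ``$t^2\times(\text{smooth in the controlled quantities}) + t\times(\text{smooth in }y')$'' — this is where the bookkeeping in $B_1,\dots,B_4$ from Section \ref{tr}, in particular the facts $B_1=O(t)$, $B_4=O(t^2)$, and $\partial B_4/\partial(tv_{tt})=O(t)$, does the real work, since differentiation in $t$ lowers powers of $t$ but the quadratic-in-$(tD^2v,Dv,t)$ structure of $B_4$ keeps enough powers to survive. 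The tangential estimates \eqref{eq5.14}–\eqref{eq5.15} guarantee that the coefficients appearing in $F$ and $A$ lie in the Hölder/iterated-Hölder classes demanded by \cite[Theorem 5.3]{HanJiang}, so the hypotheses of that theorem are met for each $k$ up to $p$.

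With the ODE in hand, I would invoke \cite[Theorem 5.3]{HanJiang} directly: it yields an expansion of $v_k$ (hence of $v$ after $k$ integrations) in powers $t^i$ and $t^i(\log t)^j$ with coefficients of the stated regularity — $C^{p-i,\alpha}(B'_r)$ for $i\le n+1$ and $C^{p-i,\epsilon}(B'_r)$ for any $\epsilon\in(0,\alpha)$ at higher orders, the loss of the endpoint exponent being exactly the effect of the logarithmic terms — together with the remainder $w_k$ satisfying $D^\tau_{y'}w_k\in C^\epsilon(\bar G_{r,r})$ and $|D^\tau_{y'}w_k|\le t^\alpha$ for $\tau\le p-k$. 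The explicit forms of $c_1,\dots,c_n,c_{n+1,1}$ in terms of $\varphi$ come from the formal computation of Section \ref{fe} (equations \eqref{fe.6}–\eqref{fe.12} and Lemma \ref{felm.1}), so the coefficients here agree with those constructed there. The shrinkage of the radius to some $0<r<1$ depending on $\partial\Omega$ and $\sigma$ is inherited from Lemma \ref{lemtr.1} and Theorem \ref{th5.1}, where the positivity $\hG(D^2u_1,Du_1,t)>0$ is only available on $G_{r_0,r_0}$.

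\textbf{Main obstacle.} The genuinely delicate step is verifying that after $k-1$ differentiations in $t$ the forcing term $\bar F^{(k-1)}$ still has the $O(t^2)$-plus-$O(t)\cdot(\text{function of }y')$ structure — equivalently, that no uncontrolled negative powers of $t$ are generated — since each differentiation both lowers powers of $t$ and, through the chain rule on $F$, brings in higher $t$-derivatives of $v$ that must themselves be controlled by the previously established estimates. This is a careful induction using the precise $O(t)$/$O(t^2)$ vanishing of $B_1,B_4$ and the regularity \eqref{eq5.14}–\eqref{eq5.15}, and it is what must be checked before \cite[Theorem 5.3]{HanJiang} can be applied verbatim.
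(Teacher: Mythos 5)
Your proposal follows essentially the same route as the paper: derive the Euler-type ODE \eqref{eq-ODE} for $v_1=v_t$ from \eqref{eq5.12}, differentiate inductively to get the equations for $v_k=D_t^kv$ with the stated indicial roots, and then invoke Theorem 5.3 of \cite{HanJiang} using the tangential estimates \eqref{eq5.14}--\eqref{eq5.15} to verify its hypotheses. The ``main obstacle'' you flag (controlling the structure of $\bar F^{(k-1)}$ after repeated $t$-differentiation via the $O(t)$/$O(t^2)$ bookkeeping of $B_1,B_4$) is precisely the step the paper treats tersely, so your account is, if anything, more explicit than the paper's.
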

\end{subsection}
\begin{subsection}{Convergence}
\label{subcon}
In the following, we assume $\varphi$ is analytic in $B_1^\prime$. By \cite{HanJiang2}, the series derived from the expansion
\begin{align*}
c_2(y')t^2+\cdots+  c_{n}(y')t^{n}+  \sum_{i=n+1}^k \sum_{j=0}^{N_i} c_{i,j}(y')t^i (\log t)^j + \cdots,
\end{align*}
converges to $v$ in $G_{r_1, \delta_1}$. Furthermore, $v$ is analytic in
\begin{align*}
y^\prime, t, t\log t
\end{align*}
for $(y^\prime, t)$ in $G_{r_1, \delta_1}$. To this end, first we apply Theorem 2.1 in \cite{HanJiang2} to show for any integer $l\geq 0$,
\begin{align*}
|D_{y^\prime}^l v(y^\prime, t) |\leq B_0 B^{(l-1)^+}(l-1)!t^2 (1-|y^\prime|)^{-(l-1)^+},\\
|D D_{y^\prime}^l v(y^\prime, t)| \leq B_0 B^{(l-1)^+}(l-1)!t (1-|y^\prime|)^{-(l-1)^+},\\
|D^2 D_{y^\prime}^l v(y^\prime, t)| \leq B_0 B^{(l-1)^+}(l-1)! (1-|y^\prime|)^{-(l-1)^+},
\end{align*}
in $G_{1,1}$, where $B_0, B$ are independent of $l$. Then we derive Theorem \ref{thm-MainConv}  by applying Theorem 4.1 in \cite{HanJiang2}.

\end{subsection}

\end{document}